\renewcommand{\R}{\mathbb{R}}
\newcommand{\N}{\mathbb{N}}
\newcommand{\U}{\mathcal{U}}
\newcommand{\X}{\mathcal{X}}
\renewcommand{\P}{\mathcal{P}}
\newcommand{\sX}{{_\X}}
\newcommand{\sU}{{_\U}}
\newcommand{\ddes}{{d_{\textnormal{o}}}}
\newcommand{\dran}{{d_{\textnormal{r}}}}
\renewcommand{\e}{{\varepsilon}}
\newcommand{\dd}{\mathrm d}
\DeclareMathOperator*{\argmin}{arg\,min}
\DeclareMathOperator{\supp}{supp}
\DeclareMathOperator{\dist}{dist}
\theoremstyle{thmstyleone}%
\newtheorem{theorem}{Theorem}[section]
\newtheorem{assumption}{Assumption}[section]
\Crefname{assumption}{Assumption}{Assumptions}
\newtheorem{lemma}{Lemma}[section]
\theoremstyle{thmstyletwo}%
\newtheorem{remark}{Remark}[section]%
\theoremstyle{thmstylethree}%
\newtheorem{definition}{Definition}[section]%
\begin{document}

\title[The Continuous Stochastic Gradient Method]{The Continuous Stochastic Gradient Method}
\subtitle{Part I - Convergence Theory}


\author[1]{\fnm{Max} \sur{Grieshammer}}\email{max.grieshammer@fau.de}

\author[1,2]{\fnm{Lukas} \sur{Pflug}}\email{lukas.pflug@fau.de}

\author[1]{\fnm{Michael} \sur{Stingl}}\email{michael.stingl@fau.de}
\author*[1]{\fnm{Andrian} \sur{Uihlein}}\email{andrian.uihlein@fau.de}

\affil[1]{\orgdiv{Department of Mathematics, Chair of Applied Mathematics}, \orgname{Friedrich-Alexander-Universität Erlangen-Nürnberg (FAU)}}
\affil[2]{\orgdiv{Competence Unit for Scientific Computing}, \orgname{Friedrich-Alexander-Universität Erlangen-Nürnberg (FAU)}}




\abstract{In this contribution, we present a full overview of the \textit{continuous stochastic gradient} (CSG) method, including convergence results, step size rules and algorithmic insights. We consider optimization problems in which the objective function requires some form of integration, e.g., expected values. Since approximating the integration by a fixed quadrature rule can introduce artificial local solutions into the problem while simultaneously raising the computational effort, stochastic optimization schemes have become increasingly popular in such contexts. However, known stochastic gradient type methods are typically limited to expected risk functions and inherently require many iterations. The latter is particularly problematic, if the evaluation of the cost function involves solving multiple state equations, given, e.g., in form of partial differential equations.
To overcome these drawbacks, a recent article introduced the CSG method, which reuses old gradient sample information via the calculation of design dependent integration weights to obtain a better approximation to the full gradient. 

While in the original CSG paper convergence of a subsequence was established for a diminishing step size, here, we provide a complete convergence analysis of CSG for constant step sizes and an Armijo-type line search. Moreover, new methods to obtain the integration weights are presented, extending the application range of CSG to problems involving higher dimensional integrals and distributed data.
}

\keywords{Stochastic Gradient Scheme, Convergence Analysis, Step Size Rule, Backtracking Line Search, Constant Step Size}
\pacs[MSC Classification]{65K05, 90C06, 90C15, 90C30}
\maketitle
\bmhead{Acknowledgments}
The research was funded by the Deutsche Forschungsgemeinschaft (DFG, German Research Foundation) - Project-ID 416229255 - CRC 1411).

\section{Introduction}
In this contribution, we present a full overview of the \textit{continuous stochastic gradient} (CSG) method, including convergence results, step size rules, algorithmic insights and applications from topology optimization. The prototype idea for CSG was first proposed in \cite{pflug_CSG}. Therein, it was shown that for expected-valued objective functions, CSG with diminishing step sizes and exact integration weights (see \Cref{sec:Weights}) almost surely produces a subsequence converging to a stationary point \cite[Theorem 20]{pflug_CSG}. This work severely generalizes this result, providing proofs for convergence of the full sequence of iterates in the case of constant step sizes and backtracking line-search techniques. Additionally, the convergence results hold in a less restrictive setting and for generalized approaches to the integration weight calculation (see \Cref{sec:Weights}).  Before going into details, we want to explain the reason for introducing what seems like yet another first-order stochastic optimization scheme.
\subsection{Motivation from PDE-constrained Optimization}
Within PDE-constrained optimization, settings with expected-valued objective functions arise in numerous applications, ranging from purely stochastic settings, like machine learning or noisy simulations \cite{Intro01,Intro04}, to fully deterministic problems, in which one is interested in a design that is optimal for an infinite set of outer parameters, e.g. \cite{Intro02,Intro03,semmler2015shape,singh2022robust}. Especially in large scale settings, one usually does not consider deterministic approaches (see, e.g., \cite{wright1999numerical}) for the solution of such problems, as they are generally too computationally expensive or even intractable. Instead, one uses stochastic optimization schemes, like the \textit{Stochastic Gradient} (SG) \cite{Monro1951} or \textit{Stochastic Average Gradient} (SAG) method \cite{LeRoux2017}. A large number of schemes have been derived from these and thoroughly analyzed, including \textit{sequential quadratic programming for stochastic optimization} \cite{curtis2021worst,berahas2021sequential}, \textit{quasi-Newton stochastic gradient schemes} \cite{bordes2009sgd,pilanci2017newton,byrd2016stochastic,moritz2016linearly} and the well known adaptive gradient schemes \textit{Adam \& Adagrad} \cite{kingma2014adam,duchi2011adaptive}, to name some prominent examples.

Problematically, such methods rely on a heavily restrictive setting, in which the objective function value of a design $u$ is simply given as the expected value of some quantity $j$, i.e., $J(u) = \mathbb{E}_x[j(u,x)]$. Even the basic setting of nesting two expectation values, i.e., $J(u) = \mathbb{E}_y[j_2(y,\mathbb{E}_x[(j_1(u,x)])]$, is beyond the scope of the mentioned schemes and requires special techniques, e.g. the \textit{Stochastic Composition Gradient Descent} (SCGD) method \cite{SCGDPaper}, which itself is again only applicable in this specific setting.

In this contribution, we investigate an application from the field of optimal nanoparticle design, which admits exactly such a complex structure. Our main interest lies in the optical properties of nanoparticles. Specifically, the color of particulate products, which has been of great interest for many fields of research \cite{Color1,Color2,Color3,Color4,Color5,PAMM}, is what we are trying to optimize for. This and similar applications serve only as motivation in this paper. However, in the numerical analysis of CSG \cite{CSGPart2}, it is demonstrated that CSG indeed represents an efficient approach to such problems. While a detailed introduction to this setting is given in \cite{CSGPart2}, we want to briefly summarize the problems arising in this application. 

To obtain the color of a particulate product, we need to calculate the important optical properties of the nanoparticles in the product. For each particle design, this requires solving the time-harmonic Maxwell's equations, which, depending of the setting, is numerically expensive. Furthermore, the color of the whole product is not determined by the optical properties of a single particle. Instead, we need to average these properties over, e.g., the particle design distribution and their orientation in the particulate product. Afterwards, the averaged values are used to calculate intermediate results for the special setting. These results then need to be integrated over the range of wavelengths, which are visible to the human eye, to obtain the color of the particulate product. Finally, the objective function uses the resulting color in a nonlinear fashion, before yielding the actual objective function value. In general, the objective function has the form
\begin{equation*}
    J(u) = j_3\big(u, \mathbb{E}_y\big[ j_2(u,y,\mathbb{E}_x[j_1(u,x,y)])\big]\big)
\end{equation*}
and can easily involve even more convoluted terms in more advanced settings.

On the one hand, the computational cost of deterministic approaches to such problems range from tremendous to infeasible, since $j_1$ is typically not easy to evaluate. On the other hand, standard schemes from stochastic optimization, like the ones mentioned above, simply cannot solve the full problem. Thus, we are in the need for a general method to tackle optimization problems, which are given by arbitrary concatenations of nonlinear functions and expectation values.
\subsection{Properties of CSG}
As mentioned in the previous section, the CSG method aims to offer a new approach to optimization problems that involve integration of properties, which are expensive to evaluate. In each iteration, CSG draws a very small number (typically 1) of random samples, as is the case for SG. However, instead of discarding the information collected through these evaluations after the iteration, these results are stored. In later iterations, all of the information collected along the way is used to build an approximation to the full objective function and its gradient, by a special linear combination. The weights appearing in this linear combination, which we call \textit{integration weights}, can be calculated in several different fashions, which are detailed in \Cref{sec:Weights}.

As a key result for CSG, we are able to show that the approximation error in both the gradient and the objective function approximation vanishes during the optimization process (\Cref{lem:GradientError}). Thus, in the course of the iterations, CSG more and more behaves like an exact full gradient algorithm and is able to solve optimization problems far beyond the scope of standard SG-type methods. Furthermore, we show that this special behaviour results in CSG having convergence properties similar to full gradient descent methods, while keeping advantages of stochastic optimization approaches, i.e., each step is computationally cheap. To be precise, we prove convergence to a stationary point for constant step sizes (\Cref{theo:ConstantStepConvergence}) and an Armijo-type line search (\Cref{theo:ArmijoConvergence}), which is based on the gradient and objective function approximations of CSG.
\subsection{Limitations of the Method}
While CSG combines advantages of deterministic and stochastic optimization schemes, the hybrid approach also yields drawbacks, which we try to address throughout this contribution. As mentioned earlier, the intended application for CSG lies in expected-valued optimization problems, in which solving the state problem is computationally expensive. In many other settings that heavily rely on stochastic optimization methods, e.g., neural networks, the situation is different. Here, we can efficiently obtain a stochastic (sub-)gradient, meaning that we are better off simply performing millions of SG iterations, than a few thousand CSG steps. In these situations, the inherent additional computational effort that lies within the calculation of the CSG integration weights (see \Cref{sec:Weights}) is no longer negligible and usually can not be compensated by the improved gradient approximation.

Furthermore, the convergence rate of CSG worsens as the dimension of integration increases. While this can be avoided, if the objective function imposes additional structure, it remains a drawback in general. A detailed analysis of this issue can be found in \cite{CSGPart2}.

We emphasize that CSG and SG-type methods share many similarities. However, their intended applications are complementary to each other, with SG preferring objective functions of simple structure and computationally cheap sampling, while CSG prefers the opposite.  
\subsection{Structure of the Paper}
\Cref{sec:Assumptions} states the general framework of this contribution as well as the basic assumptions we impose throughout the paper. Furthermore, the general CSG method is presented. 

The integration weights, which play an important role in the CSG scheme, are detailed in \Cref{sec:Weights}. Therein, we introduce four different methods to obtain weights which satisfy the necessary assumptions made in \Cref{sec:Assumptions} and analyze some of their properties. The section also describes techniques to implement mini-batches in the CSG method. 

Auxiliary results concerning CSG are given in \Cref{sec:Auxiliary}. This includes the gradient approximation property of CSG (\Cref{lem:GradientError}) and a numerical example for the generalized setting of the optimization problem (\Cref{subsec:ExampleGeneralizedSetting}). The first part of the convergence theory, i.e., convergence for constant steps (\Cref{theo:ConstantStepConvergence} and \Cref{cor:ConstantStepConvergence}), is presented in \Cref{sec:ConvergenceConstant} and tested on a simple example (\Cref{subsec:ExampleConstSteps}). 

Afterwards, in \Cref{sec:ConvergenceArmijo}, we incorporate an Armijo-type line search in the CSG method and provide a convergence analysis for the resulting optimization scheme (\Cref{theo:ArmijoConvergence}). The theoretical results are additionally tested for an academic example (\Cref{sec:StepsizeStabilityEx}).

A numerical analysis of CSG, concerning the performance for non-academic examples and convergence rates, is not part of this contribution, as this can be found in \cite{CSGPart2}.

\section{Setting and Assumptions}\label{sec:Assumptions}
In this section, we introduce the general setting and formulate the assumptions made throughout this contribution. Additionally, the basic CSG algorithm is presented and some preliminary results are stated.
\subsection{Setting}\label{subsec:SettingAndAssumptions}
As mentioned above, the convergence analysis of the CSG method is carried out in a simplified setting in order to shorten notation and improve the overall readability. For the general case, see \Cref{rem:GeneralizedSetting}.
\begin{definition}[Objective Function]
For $\ddes,\dran\in\N$, we introduce the set of admissible optimization variables $\U\subset\R^\ddes$ and the parameter set $\X\subset\R^\dran$. The objective function $J:\U\to\R$ is then given by
\begin{equation*}
    J(u):= \mathbb{E}\left[ j(u,X)\right] = \int_\X j(u,x)\mu(\dd x),
\end{equation*}
where we assume $j\in C^1(\U\times\X\,;\,\R)$ to be measurable and $X\sim\mu$.

The (simplified) optimization problem is then given by
\begin{equation}
    \min_{u\in\U} \quad \int_\X j(u,x)\mu(\mathrm{d}x). \label{eq:Problemstellung}
\end{equation}
\end{definition}
Since $\U$ and $\X$ are finite dimensional, we do not have to consider specific norms on these spaces due to equivalence of norms and can instead choose them problem specific. In the following, we will denote them simply by $\Vert\cdot\Vert_\sU$ and $\Vert\cdot\Vert_\sX$.

During the optimization, we need to draw independent random samples of the random variable $X$, as stated in the following assumption.

\begin{assumption}[Sample Sequence] \label{assum:SampleSequence}
The sequence of samples $(x_n)_{n\in\N}$ is a sequence of independent identically distributed realizations of the random variable $X\sim\mu$.
\end{assumption}
\begin{remark}[Almost Sure Density]\label{rem:SequenceDense}
We define the support of the measure $\mu$ as the (closed) subset
\begin{equation*}
     \supp(\mu) := \left\{ x\in\X\,:\,\mu\left(\mathcal{B}_\varepsilon(x)\right)>0\text{ for all }\varepsilon>0\right\}\subseteq \R^\dran.
\end{equation*}
It is important to note, that a sample sequence satisfying \Cref{assum:SampleSequence} is dense in $\supp(\mu)$ with probability 1. This can be seen as follows:

Let $x\in\supp(\mu)$ and $\varepsilon>0$. Then, given an independent identically distributed sequence $x_1,x_2,\ldots\sim\mu$, we have
\begin{equation*}
    \mathbb{P}\big(x_n\not\in\mathcal{B}_\varepsilon(x)\big) = 1-\mu(\mathcal{B}_\varepsilon(x)) < 1.
\end{equation*}
Hence, by the Borel-Cantelli Lemma \cite[Theorem 2.7]{klenke2013probability},
\begin{equation*}
    \mathbb{P}\big( x_n\not\in\mathcal{B}_\varepsilon(x)\text{ for all }n\in\N\big) = 0.
\end{equation*}
Thus, the sequence $(x_n)_{n\in\N}$ is dense in $\supp(\mu)$ with probability 1.
\end{remark}
\begin{remark}[Almost sure convergent results]
The (almost sure) density of the sample sequence plays a crucial role in the upcoming proofs. Hence, the convergence results presented in this contribution all hold in the almost sure sense. However, to improve the readability, we refrain from always mentioning this explicitly.
\end{remark}
Moreover, the sets $\U$ and $\X$ need to satisfy the following regularity conditions:
\begin{assumption}[Regularity of $\U$, $\X$ and $\mu$]\label{assum:Nummer2}
The set $\U\subset\R^\ddes$ is compact and convex. The set $\X\subset\R^\dran$ is open and bounded with $\supp(\mu)\subset\X$.
\end{assumption}
Notice that the second part of \Cref{assum:Nummer2} can always be achieved, as long as $\supp(\mu)\subset\R^\dran$ is bounded.

Finally, as in the deterministic case, we assume the gradient of the objective function to be Lipschitz continuous, in order to obtain convergence for constant step sizes.
\begin{assumption}[Regularity of $j$]\label{assum:RegularitySmallj}
The function $\nabla_1 j : \U\times\X\to\R^\ddes$ is bounded and Lipschitz continuous, i.e., there exists constants $C,L_j\in\R_{>0}$ such that
\begin{align*}
    \Vert \nabla_1 j(u,x)\Vert &\le C, \\
    \Vert \nabla_1 j(u_1,x_1) - \nabla_1 j(u_2,x_2)\Vert &\le L_j \big( \Vert u_1-u_2\Vert_\sU + \Vert x_1-x_2\Vert_\sX\big)
\end{align*}
for all $u,u_1,u_2\in\U$ and $x,x_1,x_2\in\X$.
\end{assumption}
\begin{remark}[Regularity of $\nabla J$]\label{rem:Nummer3} By \Cref{assum:SampleSequence,assum:Nummer2,assum:RegularitySmallj}, $J:\U\to\R$ is $L$-smooth, i.e., there exists $L>0$ such that
\begin{equation*}
    \left\Vert \nabla J(u_1)-\nabla J(u_2)\right\Vert \le L\Vert u_1-u_2\Vert_\sU \quad\text{for all }u_1,u_2\in\U.
\end{equation*}
\end{remark}

\subsection{The CSG Method}
Given a starting point $u_1\in\U$ and a random parameter sequence $x_1,x_2,\ldots$ according to \Cref{assum:SampleSequence}, the basic CSG method is stated in \Cref{alg:GeneralCSG}. In each iteration, the inner objective function $j$ and gradient $\nabla_1 j$ are evaluated only at the current design-parameter-pair $(u_n,x_n)$. Afterwards, the integrals appearing in $J$ and $\nabla J$ are approximated by a linear combination, consisting of all samples accumulated in previous iterations.

The coefficients $(\alpha_k)_{k=1,\ldots,n}$ appearing in Step 4 of \Cref{alg:GeneralCSG}, which we call \textit{integration weights}, are what differentiates CSG from other stochastic optimization schemes. In \cite{pflug_CSG}, where the main idea of the CSG method was proposed for the first time, a special choice of how to calculate these weights was already presented. A recap of this procedure as well as several new methods to obtain the integration weights is given in detail in \Cref{sec:Weights}.

Furthermore, $\P_\U$ appearing in Step 8 of \Cref{alg:GeneralCSG} denotes the orthogonal projection (in the sense of $\Vert\cdot\Vert_\U)$, i.e.,
\begin{equation*}
    \P_\U(u) := \argmin_{\bar{u}\in\U}\Vert u-\bar{u}\Vert_\sU.
\end{equation*}
The final general assumption we will use throughout this contribution is related to the integration weights mentioned above.
\begin{assumption}[Integration Weights]\label{assum:Weights}
Denote the sequence of designs and random parameters generated by \Cref{alg:GeneralCSG} until iteration $n\in\N$ by $(u_k)_{k=1,\ldots,n}$ and $(x_k)_{k=1,\ldots,n}$, respectively. Define $k^n:\X\to \{1,\ldots,n\}$ as
\begin{equation}
    k^n(x) := \argmin_{k=1,\ldots,n} \big( \Vert u_n-u_k\Vert_\sU + \Vert x - x_k\Vert_\sX\big).\label{eq:khochn}
\end{equation}
Then, for all $n\in\N$, there exists a probability measure $\mu_n$ on $\X$ such that
\begin{equation}
    \alpha_k^{(n)} = \int_\X \delta_{k^n(x)}(k)\mu_n(\mathrm{d}x)\quad\text{for all }k=1,\ldots,n.\label{eq:assumMeasure1}
\end{equation}
Here, $\big(\alpha_k^{(n)}\big)_{k=1,\ldots,n}$ denotes the integration weights in iteration $n\in\N$, while $\delta_{k^n(x)}(k)$ corresponds to the Dirac measure of $k^n(x)$.

Furthermore, the measures $(\mu_n)_{n\in\N}$ converge weakly to $\mu$, i.e., $\mu_n\Rightarrow \mu$ for ${n\to\infty}$, where
\begin{equation*}
    \mu_n\Rightarrow \mu \quad \text{iff} \quad \int_\X f(x)\mu_n(\mathrm{d}x) \to \int_\X f(x)\mu(\mathrm{d}x)\quad \text{for all } f\in\mathcal{C}_b(\X,\R).
\end{equation*}
\end{assumption}
There is a very simple idea hidden behind the technicalities of \Cref{assum:Weights}. Condition \eqref{eq:assumMeasure1} states that the integration weights should be somehow based on a nearest neighbor approximation
\begin{equation*}
    \nabla_1 j(u_n,x) \approx \nabla_1 j\big(u_{k^n(x)}, x_{k^n(x)}\big),
\end{equation*}
while the condition $\mu_n\Rightarrow\mu$ ensures that the weight of a sample is reasonably chosen, i.e.,
\begin{equation*}
    \alpha_k^{(n)} = \int_\X \delta_{k^n(x)}(k)\mu_n(\mathrm{d}x) \approx \int_\X \delta_{k^n(x)}(k)\mu(\mathrm{d}x).
\end{equation*}
Due to the finite dimensional setting, all convergence results proven in this contribution hold independent of the chosen norm on $\U\times\X$ implied by \eqref{eq:khochn}. However, the specific choice may strongly influence the behavior (see, \Cref{rem:MetricWeights}) and performance of CSG. Further insight on the integration weights and multiple methods to obtain weights satisfying \Cref{assum:Weights} are given in the following \Cref{sec:Weights}.
\begin{algorithm} 
\caption{General CSG Method}
\label{alg:GeneralCSG}
\begin{algorithmic}[0]
\While{Termination condition not met}
\State{Sample objective function (optional):\\
    $\qquad j_n := j(u_n,x_n)$}
\State{Sample gradient:\\
    $\qquad g_n := \nabla_1 j(u_n,x_n)$}
\State{Calculate integration weights \\ $\qquad \alpha_k$}
\State{Calculate search direction:\\
    $\qquad \hat G_{n} := \sum_{k=1}^n  \alpha_k g_k \vphantom{\Big(}$}
\State{Compute objective function value approximation (optional):\\
    $\qquad \hat J_{n} := \sum_{k=1}^n  \alpha_k j_k \vphantom{\Big(}$} 
\State{Choose step size \\ $\qquad\tau_n$}
\State{Gradient step:\\
    $\qquad u_{n+1} := \P_\U\big(u_n - \tau_n \hat G_{n}\big) \vphantom{\Big(}$}
\State{Update index:\\
    $\qquad n \leftarrow n+1 \vphantom{\Big(}$}
\EndWhile
\end{algorithmic}
\rule{\textwidth}{.75pt}
The general CSG method as proposed in \cite{pflug_CSG} for the simplified setting \eqref{eq:Problemstellung}. Further information on how to carry out the integration weight calculation in practice is given in \Cref{sec:Weights}.   
\end{algorithm}
\section{Integration Weights}\label{sec:Weights}
We start with a brief motivation: Suppose that we are currently in the 9th iteration of the CSG algorithm. So far, we have sampled $\nabla_1 j(u,x)$ at nine points $(u_i,x_i)_{i=1,\ldots,9}$ and the full gradient at the current design is given by 
\begin{equation*}
    \nabla J (u_9) = \int_\X \nabla_1 j(u_9,x)\mu(\mathrm{d}x).
\end{equation*}
In \Cref{fig:WeightsNothing}, the situation is shown for the case $\dran = \ddes = 1$. How can we use the samples $\big(\nabla_1 j(u_i,x_i)\big)_{i=1,\ldots,9}$ in an optimal fashion, to build an approximation to $\nabla J(u_9)$?

We will present four different methods of calculating the integration weights $(\alpha_i)_{i=1,\ldots,n}$, each with their own benefits and downsides.
\begin{figure}
  \begin{minipage}[c]{0.5\textwidth}
    \includegraphics[width=\textwidth]{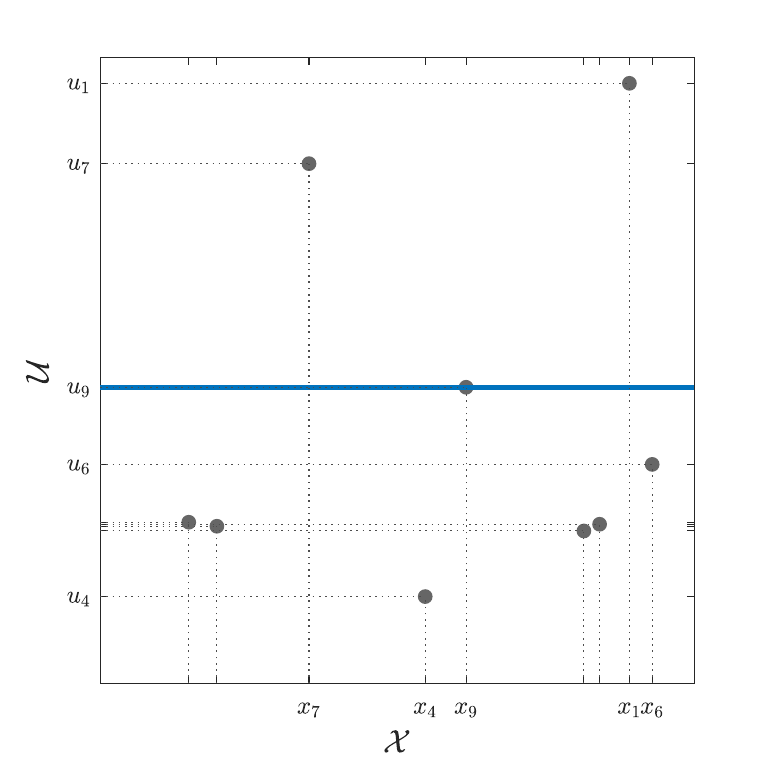}
  \end{minipage}\hfill
  \begin{minipage}[c]{0.45\textwidth}
    \caption{The grey dots represent our nine sample points $(u_i,x_i)\in\U\times\X$. The full gradient of the objective function is obtained by integrating $\nabla_1 j(u_9,\cdot)$ along the blue line.}
    \label{fig:WeightsNothing}
  \end{minipage}
\end{figure}

\subsection{Exact Weights}
To approximate the value of the integral along the bold line, we may use a nearest neighbor approximation. The underlying idea is visualized in \Cref{fig:ExactWeights}. Thus, we define the sets
\begin{align*}
    M_k := \big\{ x\in\X \, : \, \Vert u_n& - u_k \Vert_\sU + \Vert x - x_k\Vert_\sX  \\ &< \Vert u_n - u_j \Vert_\sU + \Vert x - x_j\Vert_\sX \text{ for all } j\in\{1,\ldots,n\}\setminus\{k\}\big\}.
\end{align*}
By construction, $M_k$ contains all points $x\in\X$, such that $(u_n,x)$ is closer to $(u_k,x_k)$ than to any other previous point we evaluated $\nabla_1 j$ at.
The full gradient is then approximated in a piecewise constant fashion by
\begin{align*}
    \nabla J(u_n) &= \int_\X \nabla_1 j(u_n,x)\mu(\mathrm{d}x) \\ &= \sum_{k=1}^n \int_{M_k} \nabla_1 j(u_n,x)\mu(\mathrm{d}x) \\ &\approx \sum_{k=1}^n\nabla_1 j(u_k,x_k) \mu(M_k).
\end{align*}
Therefore, we call $\alpha_k^{\text{ex}} = \mu(M_k)$ \textit{exact} integration weights, since they are based on an exact nearest neighbor approximation. These weights were first introduced in \cite{pflug_CSG} and offer the best approximation to the full gradient. However, the calculation of the exact integration weights requires full knowledge of the measure $\mu$ and is based on a Voronoi tessellation \cite{voronoi}, which is computationally expensive for high dimensions of $\U\times\X$. 

Note that, in the special case $\dim(\X)=1$, the calculation of the tessellation can be circumvented, regardless of $\dim(\U)$. Instead, the intersection points of the line $\{u_n\}\times\X$ and all faces of active Voronoi cells can be obtained directly by solving the equations appearing in the definition of $M_k$. This, however, still requires us to solve $\mathcal{O}(n^2)$ quadratic equations per CSG iteration. 
\begin{figure}
  \begin{minipage}[c]{0.5\textwidth}
    \includegraphics[width=\textwidth]{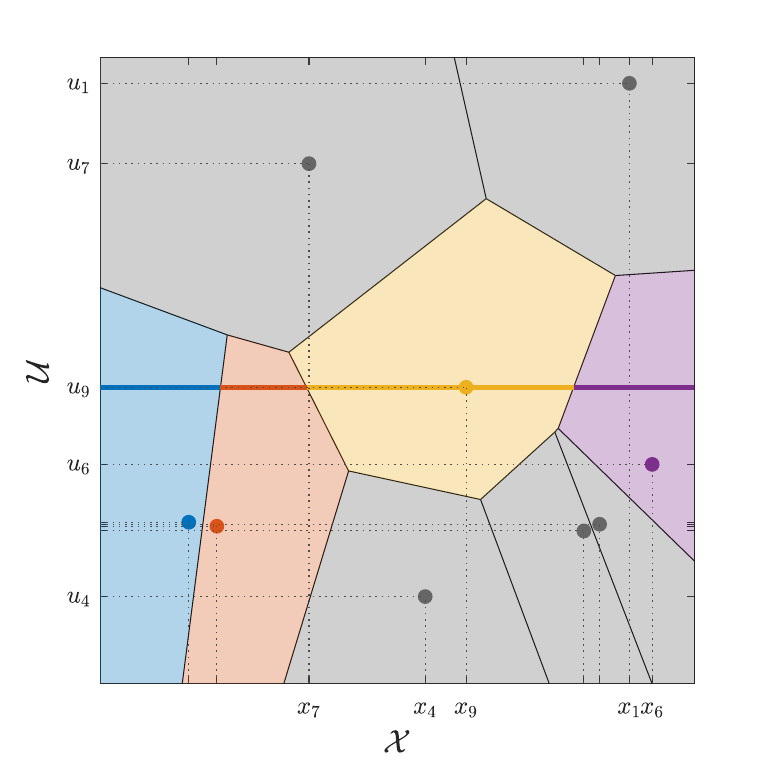}
  \end{minipage}\hfill
  \begin{minipage}[c]{0.45\textwidth}
    \caption{For the exact integration weights, $\alpha_i$ represents to the measure of the line segment that lies in the Voronoi cell around $(u_i,x_i)$. The grey cells correspond to samples with integration weight 0.}
    \label{fig:ExactWeights}
  \end{minipage}
\end{figure}

\subsection{Exact Hybrid Weights}
In some settings, the dimension of $\X$ can be very small compared to the dimension of $\U$. Hence, we might avoid computing a Voronoi tessellation in $\U\times\X$ by treating these spaces separately. For this, we introduce the sets
\begin{equation*}
    \widetilde{M}_i := \big\{ x\in\X\,:\, \Vert x-x_i\Vert_\sX < \Vert x-x_j\Vert_\sX \text{ for all }j\in\{1,\ldots,n\}\setminus\{i\}\big\}.
\end{equation*}
Denoting by $1_{M_k}$ the indicator function of $M_k$, we define the \textit{exact hybrid} weights as
\begin{equation*}
    \alpha_k^{\text{eh}} = \sum_{i=1}^n 1_{M_k}(x_i)\mu\big(\widetilde{M}_i\big).
\end{equation*}
Notice that the sets $M_k$ still appear in the definition of the exact hybrid weights, but do not need to be calculated explicitly. Instead, we only have to find the nearest sample point to $(u_n,x_i)_{i=1,\ldots,n}$, which can be done efficiently even in large dimensions. We do, however, still require knowledge of $\mu$. The idea of the exact hybrid weights is captured in \Cref{fig:ExHybridWeights}.
\begin{figure}
  \begin{minipage}[c]{0.5\textwidth}
    \includegraphics[width=\textwidth]{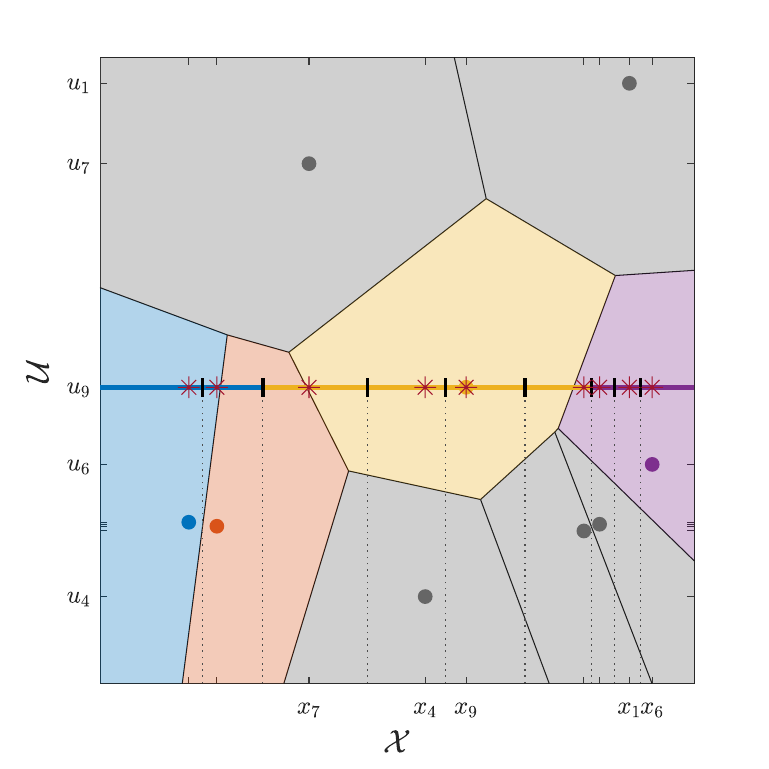}
  \end{minipage}\hfill
  \begin{minipage}[c]{0.45\textwidth}
    \caption{The red stars on the bold line represent the points $(u_n,x_i)$. For each cell, we determine the number of these points inside the cell and combine their corresponding measures $\mu\big(\widetilde{M}_i\big)$, indicated by the colored line segments. Since $\dim(\X) = 1$ in the shown example, the sets $\widetilde{M}_i$ are simply intervals, where the endpoints are given by the midpoints of two neighboring $x_i$.}
    \label{fig:ExHybridWeights}
  \end{minipage}
\end{figure}

\subsection{Inexact Hybrid weights}
To calculate the integration weights in the case that the measure $\mu$ is unknown, we may approximate $\mu\big(\widetilde{M}_i\big)$ empirically. All that is required for this approach is additional samples of the random variable $X$. To be precise, we draw enough samples such that in iteration $n$, we have in total $f(n)$ samples of $X$, where $f:\N\to\N$ is a function which is strictly increasing and satisfies $f(n)\ge n$ for all $n\in\N$. It is important to note, that we still evaluate $\nabla_1 j(u_n,\cdot)$ at only one of these points, which we denote by $x_{j_n}$. Thus, exchanging $\mu\big(\widetilde{M}_i\big)$ by its empirical approximation yields the \textit{inexact hybrid} weights
\begin{equation*}
    \alpha_k^{\text{ih}} = \frac{1}{f(n)} \sum_{i=1}^n 1_{M_k}(x_{j_i}) \sum_{m=1}^{f(n)} 1_{\widetilde{M}_{j_i}}(x_m).
\end{equation*}
How fast $f$ grows determines not only the quality of the approximation, but also the computational complexity of the weight calculation. Based on the choice of $f$, the inexact hybrid weights interpolate between the exact hybrid weights and the empirical weights, which will be introduced below. In \Cref{fig:InexactInterpol}, this behavior is shown for functions of the form $f(n) = \lfloor n^\beta \rfloor$ with $\beta\ge1$. \Cref{fig:InexHybridWeights} illustrates the general concept of the inexact hybrid weights.
\begin{figure}[h]
    \centering
    \begin{minipage}[t][][c]{0.48\textwidth}
        \centering
        \includegraphics[width = \textwidth]{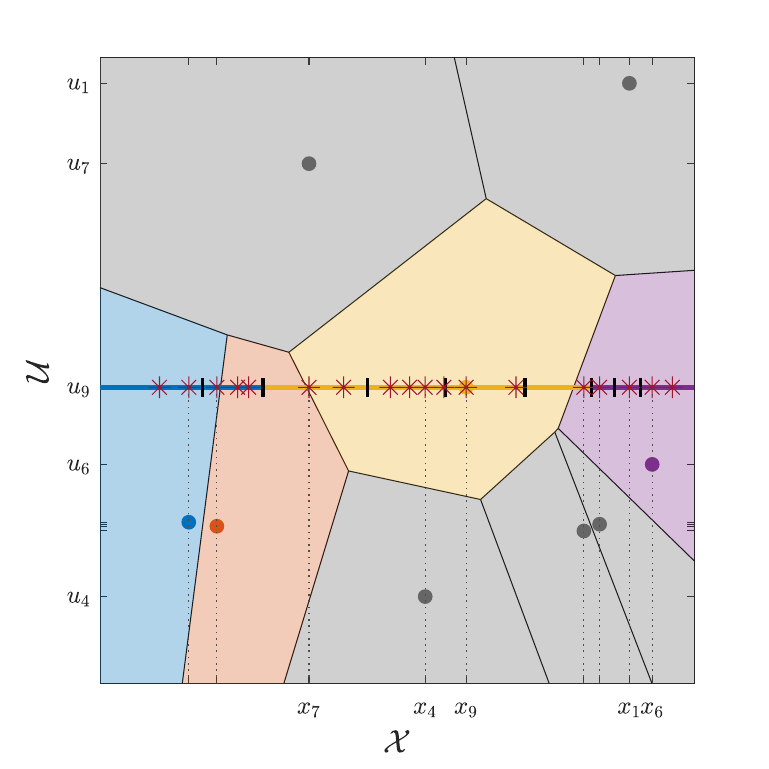}%
        \caption{The red stars on the bold line represent the points $(u_n,x_i)_{i=1,\ldots,f(n)}$. Similar to the exact hybrid weights, for each cell of the Voronoin diagramm, we count the points $(u_n,x_{j_i})_{i=1,\ldots,n}$ inside the cell (marked by dashed lines). Instead of assigning them the weights $\mu\big(\widetilde{M}_{j_i}\big)$, we weight them by an empirical approximation to this quantity, i.e., by the percentage of random samples $(x_i)_{i=1,\ldots,f(n)}$ that lie in $\widetilde{M}_{j_i}$. }
        \label{fig:InexHybridWeights}
    \end{minipage}\hfill
    \begin{minipage}[t][][c]{0.48\textwidth}
        \centering
        \includegraphics[width = \textwidth]{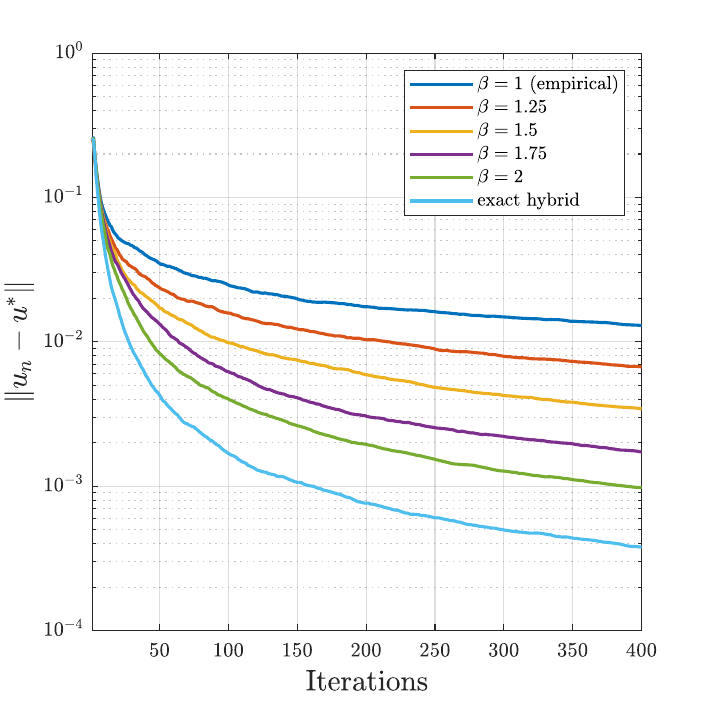}%
        \caption{Median of the results for problem \eqref{eq:ProblemConstSteps}, initialized with 1000 random starting points, for empirical weights, exact hybrid weights and several inexact hybrid weights. The function $f$ appearing in the inexact hybrid weights was chosen as $f(n)=\lfloor n^\beta \rfloor$.}
        \label{fig:InexactInterpol}
    \end{minipage}
\end{figure}
\subsection{Empirical Weights}
The \textit{empirical} weights offer a computationally cheap alternative to the methods mentioned above. Their calculation does not require any knowledge of the measure $\mu$. For the empirical weights, the quantity $\mu(M_k)$ is directly approximated by its empirical counterpart, i.e.,
\begin{equation*}
    \alpha_k^{\text{e}} = \frac{1}{n}\sum_{i=1}^n 1_{M_k}(x_i).
\end{equation*}
The corresponding picture is shown in \Cref{fig:EmpiricalWeights}. By iteratively storing the distances $\Vert x_i-x_j\Vert$, $i,j<n$, the empirical weights can be calculated very efficiently. 
\begin{figure}
  \begin{minipage}[c]{0.5\textwidth}
    \includegraphics[width=\textwidth]{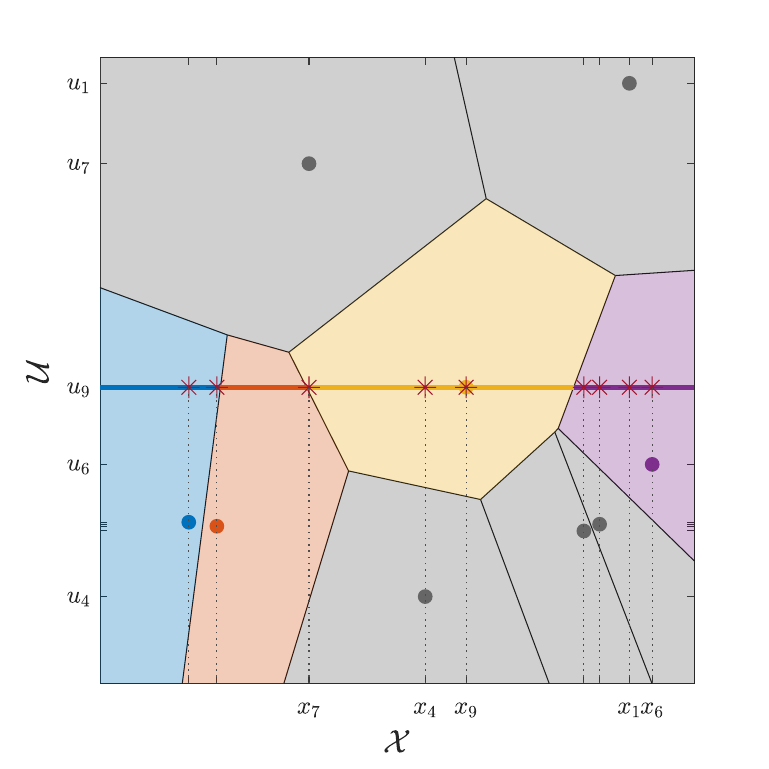}
  \end{minipage}\hfill
  \begin{minipage}[c]{0.45\textwidth}
    \caption{The empirical weights assign each line segment to its empirical measure, i.e., the fraction of points $(u_n,x_i)$ (red stars) which lie inside the corresponding cell.}
    \label{fig:EmpiricalWeights}
  \end{minipage}
\end{figure}

\subsection{Metric on $\U\times\X$}\label{rem:MetricWeights}
As mentioned after \Cref{assum:Weights}, the convergence results proven in this contribution are independent of the specific inner norms on $\U$ and $\X$, denoted by $\Vert\cdot\Vert_\sU$ and $\Vert\cdot\Vert_\sX$. Furthermore, they also hold if we substitute the outer norm on $\U\times\X$, appearing in \eqref{eq:khochn}, e.g., by a generalized $L^1$-outer norm, i.e., 
\begin{equation*}
    \big\Vert (u,x)\big\Vert_{\U\times\X} = c_1\Vert u\Vert_\sU + c_2\Vert x\Vert_\sX,
\end{equation*}
with $c_1,c_2>0$. While this does not seem particularly helpful at first glance, it in fact allows to drastically change the practical performance of CSG. By altering the ratio $\xi=\tfrac{c_2}{c_1}$, the CSG gradient approximation tends to consider fewer ($\xi<1$) or more ($\xi>1$) old samples in the linear combination. The effect such a choice can have in practice is visible in the numerical analysis of CSG in \cite{CSGPart2}.

To be precise, choosing $\xi\ll1$ results in the nearest neighbor being predominantly determined by the distance in design. In the extreme case, this means that CSG initially behaves more like a traditional SG algorithm. Analogously, $\xi\gg1$ will initially yield a gradient approximation, in which all old samples are used, even if they differ greatly in design (for discrete measure $\mu$, this corresponds to SAG). The corresponding Voronoi diagrams are shown in \Cref{fig:ChooseMetricL}.

For the sake of consistency, all numerical examples will use the euclidean norm $\Vert\cdot\Vert_{_2}$ as inner norms and we fix $\xi=1$.
\begin{figure}
    \centering
    \begin{minipage}[t][][c]{0.48\textwidth}
        \centering
        \includegraphics[width = \textwidth]{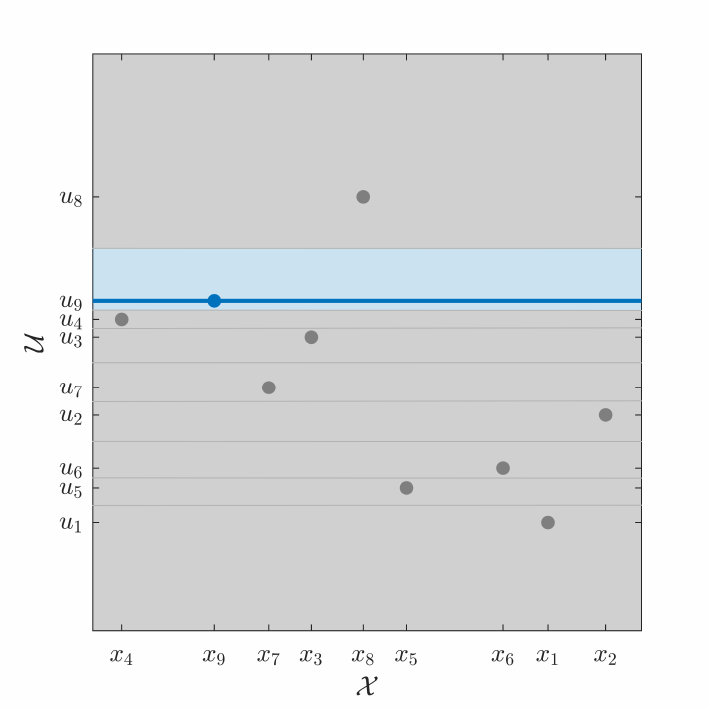}%
    \end{minipage}\hfill%
    \begin{minipage}[t][][c]{0.48\textwidth}
        \centering
        \includegraphics[width = \textwidth]{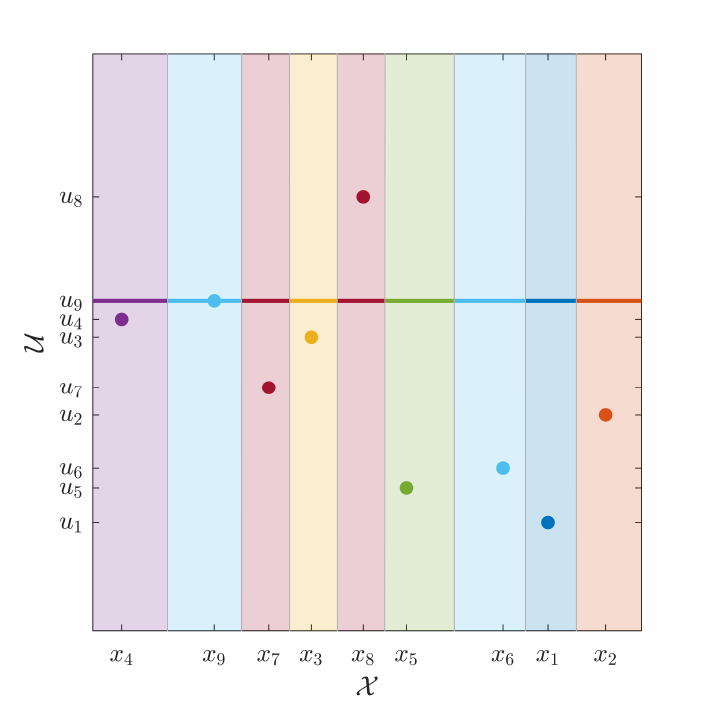}%
    \end{minipage}
    \caption{Voronoi diagrams for the integration weight calculation, given the sample points $(u_i,x_i)_{i=1,\ldots,9}$. Different ratios $\xi$ in the norm on $\U\times\X$ lead to SG-like behavior (left, $\xi = 0.01$) or averaging over all samples (right, $\xi=100$).}
    \label{fig:ChooseMetricL}
\end{figure}
\subsection{Properties of the Integration Weights}\label{sec:AppendixWeightsMeasures}
The integration weights gained by each of the methods mentioned above can be regarded as a special probability measure $\mu_n$ on the space $\X$, which we use to approximate the full integral. 
For example, the empirical weights correspond to the empirical measure (see, e.g., \cite{dudley1978central})
\begin{equation*}
    \mu_n^{\text{e}} := \frac{1}{n}\sum_{i=1}^n \delta_{x_i},
\end{equation*}
in the sense that
\begin{align*}
    \hat{G}_n^{\text{e}} &= \sum_{i=1}^n \alpha_i^{\text{e}}\nabla_1 j(u_i,x_i) = \sum_{i=1}^n \frac{1}{n}\sum_{m=1}^n 1_{M_i}(x_m) \nabla_1 j(u_i,x_i) \\
    &= \sum_{i=1}^n \frac{1}{n}\sum_{m=1}^n \delta_{k^n(x_m)}(i)\nabla_1 j(u_i,x_i) = \sum_{i=1}^n \int_\X \delta_{k^n(x)}(i)\nabla_1 j(u_i,x_i)\mu_n^{\text{e}}(\mathrm{d}x),
\end{align*}
where
\begin{equation*}
    k^n(x) := \argmin_{k = 1,\ldots,n} \big( \Vert u_n - u_k\Vert_\sU + \Vert x - x_k\Vert_\sX\big),
\end{equation*}
as in \Cref{assum:Weights}.
It was shown in \cite[Theorem 3]{EmpiricalDistrPaper}, that $\mu_n^{\text{e}}\Rightarrow \mu$ for $n\to\infty$, where $\Rightarrow$ denotes the weak convergence of measures (or equivalently, the weak-* convergence in dual space theory, see, e.g., \cite[Section 7.3]{folland2009guide}):
\begin{equation*}
    \nu_n\Rightarrow\nu \quad \text{iff} \quad \int_\X f(x)\nu_n(\mathrm{d}x) \to \int_\X f(x)\nu(\mathrm{d}x)\quad \text{for all }f\in\mathcal{C}_b(\X,\R).
\end{equation*}
Likewise, the measures associated with the other integration weights
\begin{equation*}
    \mu_n^{\text{ex}} = \mu,\quad \mu_n^{\text{eh}} = \sum_{i=1}^n \delta_{x_i}\mu\big(\widetilde{M}_i\big) \quad \text{and} \quad \mu_n^{\text{ih}} = \sum_{i=1}^n \delta_{x_{j_i}} \mu_{f(n)}^{\text{e}}\big(\widetilde{M}_{j_i}\big)
\end{equation*}
satisfy $\mu_n^{\text{ex}}\Rightarrow \mu$, $\mu_n^{\text{eh}}\Rightarrow\mu$ and $\mu_n^{\text{ih}}\Rightarrow\mu$ as well. This property plays an important role in the proof of \Cref{lem:GradientError}, as it ensures a vanishing Wasserstein distance of $\mu_n$ and $\mu$ for $n\rightarrow \infty$, i.e. $d_W (\mu_n,\mu)\to 0$ for all of the presented integration weights, see \cite[Theorem 6]{gibbs2002choosing}.

\subsection{Batches, Patches and Parallelization}
All methods for obtaining the integration weights presented above heavily relied on evaluating the gradient $\nabla_1 j$ only at a single sample point $(u_n,x_n)$ per iteration. In other words, \Cref{alg:GeneralCSG} has a natural batch size of 1. While we certainly do not want to increase this number too much, stochastic mini-batch algorithms outperform classic SG in some instances, especially if the evaluation of the gradient samples $\nabla_1 j\big(u_n,x_n^{(i)}\big)_{i=1,\ldots,N}$ can be done in parallel.

Increasing the batch size $N$ in CSG leaves the question of how the integration weights should be obtained. We could, of course, simply collect all evaluation points and calculate the weights as usual. This, however, may significantly increase the computational cost, effectively scaling it by $N$. In many instances, there is a much more elegant solution to this problem, which lets us include mini-batches without any additional weight calculation cost.

Assume, for simplicity, that $\X$ is an open interval and that $\mu$ corresponds to a uniform distribution, i.e., there exist $a<b\in\R$ such that
\begin{equation*}
    J(u) = \frac{1}{b-a}\int_a^b j(u,x)\mathrm{d}x.
\end{equation*}
Given $N\in\N$, we obtain
\begin{align*}
    J(u) &= \frac{1}{b-a}\int_a^b j(u,x)\mathrm{d}x = \frac{1}{b-a}\sum_{i=1}^N \int_{a+(i-1)\tfrac{b-a}{N}}^{a + i\tfrac{b-a}{N}} j(u,x)\mathrm{d}x \\
    &= \frac{1}{b-a}\int_{a}^{a+\tfrac{b-a}{N}} \sum_{i=1}^N j\left(u,x+(i-1)\tfrac{b-a}{N}\right)\mathrm{d}x =: \frac{1}{b-a}\int_{a}^{a+\tfrac{b-a}{N}} \tilde{j}(u,x)\mathrm{d}x
\end{align*}
Thus, we can include mini-batches of size $N\in\N$ into CSG by performing the following steps in each iteration:
\begin{itemize}
    \item[1.)] Draw random sample $x_n\in\left(a,a+\tfrac{b-a}{N}\right)$.
    \item[2.)] Evaluate $\nabla_1 j(u_n,\cdot)$ at each
    \begin{equation*}
        x_n^{(i)} = x_n + (i-1)\tfrac{b-a}{N},\quad i=1,\ldots,N.
    \end{equation*}
    \item[3.)] Compute 
    \begin{equation*}
        \nabla_1 \tilde{j}(u_n,x_n) = \sum_{i=1}^N \nabla_1 j\big(u_n,x_n^{(i)}\big).
    \end{equation*}
    \item[4.)] Calculate integration weights $(\alpha_k)_{k=1,\ldots,n}$ as usual for $(u_k,x_k)_{k=1,\ldots,n}$ and set
    \begin{equation*}
        \hat{G}_n = \sum_{k=1}^n \alpha_k \nabla_1 \tilde{j}(u_k,x_k).
    \end{equation*}
\end{itemize}
It is straightforward to apply this idea to higher dimensional rectangular cuboids. Furthermore, the process of subdividing $\X$ into smaller patches and drawing the samples in only one of these regions $\X_1\subset\X$ can be generalized to more complex settings as well. However, it is necessary that translating the sample $x_n$ into the other patches preserves the underlying probability distribution $X\sim\mu$, e.g., if $\mu$ is induced by a Lipschitz continuous density function and the different patches are obtained by reflecting, translating, rotating and scaling $\X_1$. A conceptual example is shown in \Cref{fig:batchidea}.

The effect of introducing mini-batches into CSG is tested for the optimization problem
\begin{equation}
    \min_{u\in[-5,5]^2}\quad \frac{1}{2}\int_{\X}\Vert u-x\Vert_2^2 \mathrm{d}x, \label{eq:BatchProblem}
\end{equation}
by dividing $\X=\left(-\tfrac{1}{2},\tfrac{1}{2}\right)^2$ into small squares of sidelength $\tfrac{1}{N}$, achieving a batch size of $N^2$. The results of 500 optimization runs with random starting points are given in \Cref{fig:batchrates2}. Although increasing the batch size does, as expected, not influence the rate of convergence, it still improves the overall performance and should definitely be considered for complex optimization problems, especially if the gradient sampling can be performed in parallel.
\begin{figure}[htp]
    \centering
    \begin{minipage}[b][][c]{0.48\textwidth}
        \centering
        \resizebox{\textwidth}{!}{
        \begin{tikzpicture}[scale = 1]
        \fill[black!20] (3,0) -- (5,0) -- (5,3) -- (4,3) -- (4,2) -- (3,2) -- (3,0);
        \draw[very thick] (0,0) -- (2,0) -- (2,1) -- (3,1) -- (3,0) -- (5,0) -- (5,1) -- (6,1) -- (6,0) -- (8,0) -- (8,3) -- (7,3) -- (7,4) -- (5,4) -- (5,5) -- (3,5) -- (3,4) -- (2,4) -- (2,5) -- (1,5) -- (1,2) -- (0,2) -- (0,0);
        \draw[dashed,color = blue] (3,4) -- (3,1);
        \draw[dashed,color = blue] (5,4) -- (5,1);
        \draw[dashed,color = blue] (4,2) -- (4,3);
        \draw[dashed,color = blue] (1,2) -- (4,2);
        \draw[dashed,color = blue] (5,2) -- (8,2);
        \draw[dashed,color = blue] (4,3) -- (5,3);
        \filldraw[red] (3.5,1) circle (3pt) node[anchor = north west]{\Large $x_n$};
        \filldraw[red!40] (1,0.5) circle (3pt);
        \filldraw[red!40] (2.5,3) circle (3pt);
        \filldraw[red!40] (4.5,4) circle (3pt);
        \filldraw[red!40] (6,3.5) circle (3pt);
        \filldraw[red!40] (7,0.5) circle (3pt);
        \end{tikzpicture}
        }
    \end{minipage}\hfill
    \begin{minipage}[b][][c]{0.48\textwidth}
        \centering
        \includegraphics[width = \textwidth]{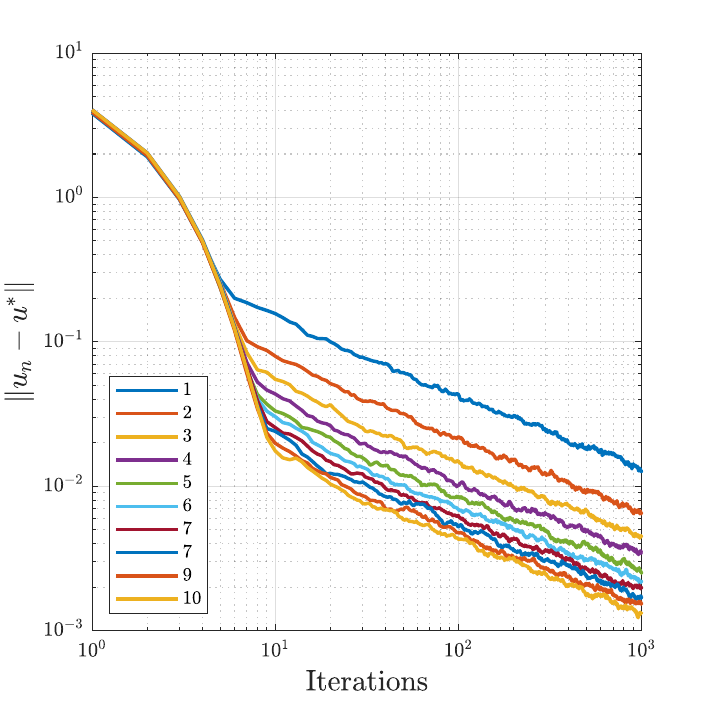}%
    \end{minipage}
    \par
    \begin{minipage}[t][][c]{0.48\textwidth}
        \caption{To obtain a mini-batch of six samples points, the domain $\X$ is subdivided into six patches, indicated by the blue lines. In each iteration, a sample point $x_n$ is drawn in patch $\X_1$ (grey) and afterwards translated into all other patches.}
        \label{fig:batchidea}
    \end{minipage}\hfill
    \begin{minipage}[t][][c]{0.48\textwidth}
        \caption{Median distance of the CSG iterates to the optimal solution $u^\ast = 0\in\R^2$ of \eqref{eq:BatchProblem} in each iteration. The batch size used is equal to $N^2$, where different values of $N$ are indicated by the different colors. For all runs, a constant step size of $\tau = \tfrac{1}{2}$ was chosen.}
        \label{fig:batchrates2}
    \end{minipage}
\end{figure}
\section{Auxiliary Results}\label{sec:Auxiliary}
From now on, unless explicitly stated otherwise, we will always assume \Cref{assum:SampleSequence,assum:Nummer2,assum:RegularitySmallj,assum:Weights} to be satisfied.
We will later show that the CSG method converges to stationary points, which we define next.
\begin{definition}[Stationary Points]\label{def:StationaryPoints} Let $J\in C^1(\U)$ be given. We say $u^\ast\in\U$ is a stationary point of $J$, if there exists $t>0$ such that

\begin{equation*}
    \P_\U\left( u^\ast - t\nabla J(u^\ast)\right) = u^\ast.
\end{equation*}
Furthermore, we denote by
\begin{equation*}
    \mathcal{S}(J):= \left\{ u\in\U\,:\, \P_\U\left(u-t\nabla J(u)\right) = u \text{ for some }t>0\right\}
\end{equation*}
the set of all stationary points of $J$.
\end{definition}
Gradient descent methods for $L$-smooth objective functions have thoroughly been studied in the past (e.g. \cite{ProjGradientGoldstein,ProjGradientRates}). The key ingredients for obtaining convergence results with constant step sizes are the descent lemma and the characteristic property of the projection operator, which we state in the following.
\begin{lemma}[Descent Lemma]\label{lem:descent lemma}
If $J:\U\to\R$ is $L$-smooth, then it holds
\[
J(u_1)\le J(u_2) +\nabla J(u_2)^\top (u_1-u_2) +\frac{L}{2}\Vert u_1-u_2\Vert_\sU^2\quad \forall u_1,u_2\in\U.
\]
\end{lemma}

\begin{lemma}[Characteristic Property of Projection]\label{lem: char property}
For a projected step in direction $\hat{G}_n$, i.e., $u_{n+1}=\P_\U(u_n-\tau_n\hat{G}_n)$, the following holds:
\[
\hat{G}_n^\top(u_n-u_{n+1}) \ge \frac{\Vert u_n-u_{n+1}\Vert_\sU^2}{\tau_n}.
\]
\end{lemma}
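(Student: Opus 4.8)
The plan is to derive the stated inequality directly from the variational characterization of the orthogonal projection onto the convex set $\U$. Recall that by \Cref{assum:Nummer2} the set $\U$ is convex, and that the iterate $u_n$ lies in $\U$, being either the prescribed starting point or the image of a previous projection step. Writing $v := u_n - \tau_n\hat G_n$, the point $u_{n+1} = \P_\U(v)$ is by definition the minimizer of $\bar u \mapsto \Vert v - \bar u\Vert^2$ over $\U$, and the goal is to turn this optimality into the asserted lower bound on $\hat G_n^\top(u_n - u_{n+1})$.

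First I would establish the first-order optimality condition for this minimization, i.e.\ the standard variational inequality for projections. Since $\U$ is convex, for any $w\in\U$ the whole segment $u_{n+1} + t(w - u_{n+1})$ stays in $\U$ for $t\in[0,1]$, so the scalar function $t \mapsto \Vert v - u_{n+1} - t(w - u_{n+1})\Vert^2$ attains its minimum over $[0,1]$ at the endpoint $t=0$. Differentiating and using that its derivative at $t=0$ must be nonnegative then yields
\[
(v - u_{n+1})^\top(w - u_{n+1}) \le 0 \quad\text{for all } w\in\U.
\]

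Next I would specialize this inequality to the admissible test point $w = u_n \in \U$, which gives $(u_n - \tau_n\hat G_n - u_{n+1})^\top(u_n - u_{n+1}) \le 0$. Expanding the inner product separates the term $\Vert u_n - u_{n+1}\Vert^2$ from $\tau_n\hat G_n^\top(u_n - u_{n+1})$; rearranging and dividing by $\tau_n > 0$ produces exactly the claimed bound.

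Since every step is elementary, I do not anticipate a genuine obstacle: the only point requiring care is the justification of the variational inequality, which is precisely where convexity of $\U$ enters essentially (a nonconvex admissible set would invalidate the segment argument). I would also note that the transpose notation presupposes that $\Vert\cdot\Vert$ is induced by the corresponding inner product, so I would make sure the projection $\P_\U$ and the bilinear form $a^\top b$ refer to the same Euclidean structure throughout.
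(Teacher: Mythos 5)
Your proof is correct and follows essentially the same route as the paper: the paper simply cites the second projection theorem (the variational inequality $(v-\P_\U(v))^\top(w-\P_\U(v))\le 0$ for all $w\in\U$, \cite[Theorem 6.41]{DescentUndProjection}) and specializes it to $w=u_n$, which is exactly your argument, except that you additionally derive the variational inequality from the segment/optimality argument rather than citing it. The computation and the rearrangement dividing by $\tau_n>0$ are exactly as intended.
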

\begin{proof}
\Cref{lem:descent lemma} corresponds to \cite[Lemma 5.7]{DescentUndProjection}. \Cref{lem: char property} is a direct consequence of \cite[Theorem 6.41]{DescentUndProjection}.
\end{proof}

Before we move on to results that are specific for the CSG method, we state a general convergence result, which will be helpful in the later proofs.

\begin{lemma}[Finitely Many Accumulation Points]\label{lem:Convergence}
Let $(u_n)_{n\in\N}\subset\R^d$ be a bounded sequence. Suppose that $(u_n)_{n\in\N}$ has only finitely many accumulation points and it holds $\Vert u_{n+1}-u_n\Vert\to 0$. Then $(u_n)_{n\in\N}$ is convergent. 
\end{lemma}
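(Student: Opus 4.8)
The plan is to argue by contradiction, exploiting the fact that finitely many accumulation points are separated by a positive distance while consecutive iterates become arbitrarily close. Let me think through the geometry: if the sequence does not converge, then since it is bounded it must have at least two distinct accumulation points (by Bolzano–Weierstrass, it has at least one, and non-convergence of a bounded sequence forces a second). Enumerate the accumulation points as $a_1,\dots,a_m$ and set $\delta := \tfrac12\min_{i\neq j}\Vert a_i-a_j\Vert > 0$, which is strictly positive because the set is finite and the points are distinct. The idea is that the iterates must eventually stay inside the union of the small balls $\mathcal B_\delta(a_i)$, yet the condition $\Vert u_{n+1}-u_n\Vert\to 0$ prevents the sequence from jumping between two such balls.

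First I would establish that for every $\e>0$ there exists $N$ such that $u_n\in\bigcup_{i=1}^m \mathcal B_\e(a_i)$ for all $n\ge N$. This follows because the complement $K := \U\setminus\bigcup_i \mathcal B_\e(a_i)$ (intersected with the closure of the bounded range of the sequence) is compact, and if infinitely many iterates lay in $K$, that subsequence would have an accumulation point in $K$ by compactness — contradicting that $a_1,\dots,a_m$ are the \emph{only} accumulation points. Next I would fix $\e := \delta/3$ and choose $N_1$ so large that $\Vert u_{n+1}-u_n\Vert < \delta/3$ for all $n\ge N_1$, using the hypothesis. Combining both, for $n\ge\max(N,N_1)$ each iterate lies in exactly one ball $\mathcal B_{\delta/3}(a_{i(n)})$, the index $i(n)$ being well defined since the balls of radius $\delta/3$ are pairwise disjoint (their centers are more than $2\delta$ apart).

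The core step is to show the index $i(n)$ is eventually constant. If $u_n\in\mathcal B_{\delta/3}(a_i)$ and $u_{n+1}\in\mathcal B_{\delta/3}(a_j)$ with $i\neq j$, then by the triangle inequality
\[
\Vert a_i-a_j\Vert \le \Vert a_i-u_n\Vert + \Vert u_n-u_{n+1}\Vert + \Vert u_{n+1}-a_j\Vert < \tfrac{\delta}{3}+\tfrac{\delta}{3}+\tfrac{\delta}{3} = \delta,
\]
which contradicts $\Vert a_i-a_j\Vert\ge 2\delta$. Hence $i(n+1)=i(n)$ for all large $n$, so the index stabilizes at some fixed $i^\ast$, meaning $u_n\in\mathcal B_{\delta/3}(a_{i^\ast})$ for all large $n$. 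But then $a_{i^\ast}$ is the only accumulation point of the tail, so $u_n\to a_{i^\ast}$, contradicting the assumption that the sequence has at least two accumulation points (or simply contradicting non-convergence). Therefore $(u_n)_{n\in\N}$ converges.

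I expect the only genuinely delicate point to be the first step — rigorously arguing that the iterates are eventually trapped in the union of the $\e$-balls. This is where compactness and the \emph{finiteness} of the accumulation set are both essential: finiteness guarantees the separation constant $\delta$ is positive, and compactness of the complement converts "infinitely many escaping iterates" into a forbidden extra accumulation point. Everything after that is elementary triangle-inequality bookkeeping.
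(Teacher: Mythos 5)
Your proposal is correct and takes essentially the same approach as the paper's proof: both exploit the positive separation $\delta$ between the finitely many accumulation points, use boundedness (Bolzano--Weierstrass) to show the iterates are eventually trapped in small disjoint balls around the accumulation points, and then use $\Vert u_{n+1}-u_n\Vert\to 0$ to rule out jumps between distinct balls, forcing the sequence into a single ball and hence to a single limit. The only differences are cosmetic: you frame it as a contradiction with ball radius $\delta/3$ and an explicit triangle inequality, while the paper argues directly with radius $\delta_0/4$ and a lower bound on the distance between the balls.
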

\begin{proof}
Let $\left\{\bar{u}_1,\ldots,\bar{u}_K\right\}$ be the accumulation points of $(u_n)_{n\in\N}$ and define
\[
\delta_0 := \min_{\substack{i,j\in\{1,\ldots,K\} \\ i\neq j}} \Vert \bar{u}_i-\bar{u}_j\Vert,
\]
i.e, the minimal distance between two accumulation points of $(u_n)_{n\in\N}$. The accumulation point closest to $u_n$ is defined as:
\[
\bar{u}(n) := \argmin_{u\in\{\bar{u}_1,\ldots,\bar{u}_K\}} \Vert u_n-u\Vert.
\]

Next up, we show that there exists $N\in\N$ such that for all $n\ge N$ it holds $\Vert u_n-\bar{u}(n)\Vert < \tfrac{\delta_0}{4}$. We prove this by contradiction.

Thus, we assume there exist infinitely many $n\in\N$ such that $\Vert u_n-\bar{u}(n)\Vert \ge \tfrac{\delta_0}{4}$. This subsequence is again bounded and therefore must have an accumulation point. By construction, this accumulation point is no accumulation point of $(u_n)_{n\in\N}$, which is a contradiction.

Now, let $N_1\in\N$ be large enough such that $\Vert u_n-\bar{u}(n)\Vert<\tfrac{\delta_0}{4}$ for all $n\ge N_1$. By our assumptions, there also exists $N_2\in\N$ with $\Vert u_{n+1}-u_n\Vert<\tfrac{\delta_0}{4}$ for all $n\ge N_2$. Define $N:=\max\{N_1,N_2\}$.

Let $n \geq N$ and assume for contradiction that $\bar{u}(n) \neq \bar{u}(n+1)$. We obtain
\begin{equation*}
\text{dist}\big( \mathcal{B}_{\delta_0/4}(\bar{u}(n))\, , \,  \mathcal{B}_{\delta_0/4}(\bar{u}(n+1))\big) \ge \tfrac{\delta_0}{2} > \tfrac{\delta_0}{4} > \Vert u_{n}-u_{n+1}\Vert \quad \text{for all } n\ge N,
\end{equation*}
where $\dist(A,B) := \inf_{x\in A,y\in B}\Vert x-y\Vert$ for $A,B\subset\R^d$. This is a contradiction to  $\Vert u_{n+1}-u_n\Vert<\tfrac{\delta_0}{4}$ for all $n \geq N$.

We thus conclude that $u_n\in\mathcal{B}_{\delta_0/4}\big(\bar{u}(n)\big)$ implies $u_{n+1}\in\mathcal{B}_{\delta_0/4}\big(\bar{u}(n)\big)$ as well. Since $\bar{u}(n)$ is the only accumulation point on $\mathcal{B}_{\delta_0/4}\big(\bar{u}(n)\big)$, it follows that $u_n\to\bar{u}(N)$.

\end{proof}


\subsection{Results for CSG Approximations}
From now on, let $(u_n)_{n\in\N}$ denote the sequence of iterates generated by \Cref{alg:GeneralCSG}. In this section, we want to show that the CSG approximations $\hat{J}_n$ and $\hat{G}_n$ in the course of iterations approach the values of $J(u_n)$ and $\nabla J(u_n)$, respectively. This is a key result for the convergence theorems stated in \Cref{sec:ConvergenceConstant} and \Cref{sec:ConvergenceArmijo}.
\begin{lemma}[Density Result in $\X$]\label{lem:ErstesResultat}
Let $(x_n)_{n\in\N}$ be the random sequence appearing in \Cref{alg:GeneralCSG}. For all $\e>0$  there exists $N\in\N$ such that
\begin{equation*}
    \min_{n\in\{1,\ldots,N\}} \Vert x_n -x\Vert_\sX <\e \quad\text{for all }x\in\supp(\mu).
\end{equation*}
\end{lemma}
\begin{proof}
Utilizing the compactness of $\supp(\mu)\subset\R^\dran$, there exists $T\in\N$ such that $\left( \mathcal{B}_{\e/2}(m_i)\right)_{i=1,\ldots,T}$ is an open cover of $\supp(\mu)$ consisting of balls with radius $\frac{\e}{2}$ centered at points $m_i\in\supp(\mu)$. Thus, for each $x\in\supp(\mu)$ we can find $i_x\in\{1,\ldots,T\}$ with $x\in\mathcal{B}_{\e/2}(m_{i_x})$. Hence, by \Cref{rem:SequenceDense}, for each $i=1,\ldots,T$, there exists $n_i\in\N$ satisfying
\begin{equation*}
    \Vert x_{n_i}-m_i\Vert_\sX < \tfrac{\e}{2}.
\end{equation*}
Defining 
\begin{equation*}
    N := \max_{i\in\{1,\ldots,T\}} n_i < \infty,
\end{equation*}
for all $x\in\supp(\mu)$ we have
\begin{equation*}
    \begin{aligned}
    \min_{n\in\{1,\ldots,N\}} \Vert x - x_n\Vert_\sX &\le \min_{n\in\{1,\ldots,N\}}\big( \Vert x-m_{i_x}\Vert_\sX + \Vert m_{i_x}-x_n\Vert_\sX\big) \\
    &< \tfrac{\e}{2}+\min_{n\in\{1,\ldots,N\}}\Vert m_{i_x}-x_n\Vert_\sX < \tfrac{\e}{2}+\tfrac{\e}{2} = \e.
    \end{aligned}
\end{equation*}
\end{proof}

\begin{lemma}[Density Result in $\U \times \X$]\label{lem:ZweitesResultat}
Let $(u_n)_{n\in\N}$, $(x_n)_{n\in\N}$ be the sequences of optimization variables and sample sequence appearing in \Cref{alg:GeneralCSG}. For all $\e>0$ there exists $N\in\N$ such that
\begin{equation*}
    Z_n(x) := \min_{k\in\{1,\ldots,n\}}\big(\Vert u_n-u_k\Vert_\sU + \Vert x-x_k\Vert_\sX\big) < \e 
\end{equation*}
for all $n>N$ and all $x\in\supp(\mu)$.
\end{lemma}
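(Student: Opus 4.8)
The plan is to prove this density result in $\U \times \X$ by combining the density of the sample sequence $(x_n)$ in $\supp(\mu)$ (via \Cref{lem:ErstesResultat}) with control on how far the iterates $(u_n)$ can move in a bounded number of steps. The key structural observation is that in the definition of $Z_n(x)$, the index $k$ ranges over $\{1,\ldots,n\}$, so as $n$ grows we have access to all previously visited pairs $(u_k, x_k)$. The term $\Vert x - x_k \Vert_\sX$ can be made small by choosing a $k$ with $x_k$ close to $x$ (density in $\X$), while the term $\Vert u_n - u_k \Vert_\sU$ requires that the iterate has not drifted too far from its position at that earlier index $k$.

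First I would fix $\e > 0$ and apply \Cref{lem:ErstesResultat} with tolerance $\tfrac{\e}{2}$ to obtain an index $N_0 \in \N$ such that for every $x \in \supp(\mu)$ there is some $k_x \in \{1,\ldots,N_0\}$ with $\Vert x - x_{k_x} \Vert_\sX < \tfrac{\e}{2}$. The remaining task is to ensure that for all $n$ sufficiently large, the iterate $u_n$ is within $\tfrac{\e}{2}$ of one of these finitely many earlier iterates $u_1,\ldots,u_{N_0}$. The crucial ingredient here is that the step sizes and gradient approximations are bounded: since $\U$ is compact (\Cref{assum:Nummer2}) and $j \in C^1(\U \times \X;\R)$ with $\U \times \cl(\X)$ compact, the gradient samples $g_k = \nabla_1 j(u_k, x_k)$ are uniformly bounded, hence the convex combination $\hat{G}_n$ is uniformly bounded by some constant $C$. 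With a constant (or bounded) step size $\tau_n \le \tau$, the nonexpansiveness of the projection $\P_\U$ gives $\Vert u_{n+1} - u_n \Vert_\sU \le \tau \Vert \hat{G}_n \Vert_\sU \le \tau C$, controlling single-step movement.

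The heart of the argument, however, is not bounding a single step but bounding the distance $\Vert u_n - u_{k} \Vert_\sU$ for a well-chosen recent index $k \le n$. The natural approach is to reuse the closest-sample index: I would argue that since $(x_n)$ is dense, the set of indices $k$ with $x_k$ close to a given $x$ is infinite, so for each $x$ we can find arbitrarily large indices $k$ with $\Vert x - x_k \Vert_\sX < \tfrac{\e}{2}$; then we only need $\Vert u_n - u_k \Vert_\sU < \tfrac{\e}{2}$ for one such large $k$ close to $n$. Using the single-step bound, for $k \le n$ we have $\Vert u_n - u_k \Vert_\sU \le \sum_{i=k}^{n-1} \Vert u_{i+1}-u_i\Vert_\sU \le (n-k)\tau C$, so it suffices to locate, within the last $\lfloor \tfrac{\e}{2\tau C}\rfloor$ indices before $n$, an index $k$ whose sample $x_k$ lies within $\tfrac{\e}{2}$ of $x$ — uniformly over all $x \in \supp(\mu)$.

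The main obstacle is precisely this last uniformity: establishing that for all large $n$, every point $x \in \supp(\mu)$ has a sample $x_k$ near it among a \emph{fixed-length window} of recent indices $\{n - m, \ldots, n\}$. This does not follow from plain density of $(x_n)$, which only guarantees samples near $x$ at \emph{some} (possibly early) indices. The resolution must exploit the underlying i.i.d. structure and an almost-sure recurrence argument: because each $x_k$ is an independent draw from $\mu$, with probability one every ball $\mathcal{B}_{\e/2}(x)$ centered at a point of $\supp(\mu)$ is hit infinitely often and, by a finite-cover plus Borel–Cantelli argument analogous to \Cref{rem:SequenceDense}, hit within every sufficiently long window. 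I would therefore cover $\supp(\mu)$ by finitely many balls of radius $\tfrac{\e}{4}$, show that almost surely each such ball is eventually visited in every window of a suitable fixed length $m$, and then set $N$ large enough that both this recurrence property and the movement bound $(n-k)\tau C < \tfrac{\e}{2}$ hold simultaneously; combining the two halves via the triangle inequality yields $Z_n(x) < \e$ for all $n > N$ and all $x \in \supp(\mu)$, completing the proof.
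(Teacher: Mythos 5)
Your proposal has a genuine gap, and it sits exactly at the point you flag as ``the main obstacle'': the fixed-window recurrence property you invoke is false. Fix a ball $\mathcal{B}\subset\X$ with $p:=\mu(\mathcal{B})\in(0,1)$ and a window length $w\in\N$. Partition the sample indices into consecutive disjoint blocks of length $w$; since the samples are i.i.d., the events ``block $j$ contains no sample in $\mathcal{B}$'' are independent and each has probability $(1-p)^{w}>0$. By the \emph{second} Borel--Cantelli lemma, almost surely infinitely many of these events occur, i.e.\ there are infinitely many windows of length $w$ that miss $\mathcal{B}$ entirely --- no matter how large $w$ is chosen. Hence there is no $N$ after which every length-$w$ window hits every ball of your finite cover, and your concluding step, combining $\Vert u_n-u_k\Vert_\sU\le (n-k)\tau C<\tfrac{\e}{2}$ with $\Vert x-x_k\Vert_\sX<\tfrac{\e}{2}$ for some $k$ in the window, cannot be carried out. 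Repairing this by letting the window grow with $n$ (length of order $\log n$ makes the miss-probabilities summable, so the first Borel--Cantelli lemma applies) destroys the other half of your argument: with the constant step sizes this paper is concerned with, the iterate may drift a distance of order $\tau C\log n\to\infty$ across such a window, so $\Vert u_n-u_k\Vert_\sU$ is no longer small.

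The paper's proof avoids any control of the dynamics (step-size bounds, gradient bounds) altogether, and this is the idea your approach is missing: instead of asking for samples near $x$ drawn at times \emph{close to} $n$, it asks for past iterates close to $u_n$ \emph{in space}. Cover the compact set $\U$ by finitely many balls $\mathcal{B}_{\e/4}(m_i)$ and keep the set $I$ of indices $i$ whose ball is visited by $(u_n)_{n\in\N}$ infinitely often; after some finite time $N_1$, every iterate lies in a ball with index in $I$. For each $i\in I$, the samples drawn at the infinitely many visit times of $\mathcal{B}_{\e/4}(m_i)$ form again an i.i.d.\ sequence, dense in $\supp(\mu)$, so \Cref{lem:ErstesResultat} applied to this subsequence yields finitely many visit times whose samples are $\tfrac{\e}{2}$-dense in $\supp(\mu)$. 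For $n$ large, $u_n$ lies in some $\mathcal{B}_{\e/4}(m_i)$ with $i\in I$, and picking $k$ among those early visit times of the \emph{same} ball gives $\Vert u_n-u_k\Vert_\sU<\tfrac{\e}{2}$ and $\Vert x-x_k\Vert_\sX<\tfrac{\e}{2}$ simultaneously, with no bound whatsoever on how far the iterate moves per step. Your opening application of \Cref{lem:ErstesResultat} is the right ingredient; it must simply be applied to each visit-time subsequence rather than to an initial segment of the full sample sequence.
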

\begin{proof}
Since $\U$ is compact, we can find a finite cover $\left( \mathcal{B}_{\e/4}(m_i)\right)_{i=1,\ldots,T}$ of $\U$ consisting of $T\in\N$ balls with radius $\frac{\e}{4}$ centered at points $m_i\in\U$. Define $I\subset\{1,\ldots,T\}$ as
\begin{equation*}
    I := \big\{ i\in\{1,\ldots,T\}\, :\, u_n\in\mathcal{B}_{\e/4}(m_i)\text{ for infinitely many }n\in\N\big\}.
\end{equation*}
By our definition of $I$, for each $i\in\{1,\ldots,T\}\setminus I$ there exists $\tilde{N}_i\in\N$ such that ${u_n\not\in\mathcal{B}_{\e/4}(m_i)}$ for all $n>\tilde{N}_i$. Setting 
\begin{equation*}
    N_1 := \max_{i\in\{1,\ldots,T\}\setminus I} \tilde{N}_i,
\end{equation*}
it follows that 
\begin{equation} \label{eq:NotInBall}
    u_n\not\in\mathcal{B}_{\e/4}(m_i) \text{ for each $n>N_1$ and all $i\in\{1,\ldots,T\}\setminus I$.}
\end{equation}
For $i\in I$ let $\big( u_{n^{(i)}_t}\big)_{t\in\N}$ be the subsequence consisting of all elements of $(u_n)_{n\in\N}$ that lie in $\mathcal{B}_{\e/4}(m_i)$. Observe that $\big(x_{n_t^{(i)}}\big)_{t\in\N}$ is independent and identically distributed according to $\mu$, since for all $A\subseteq \supp(\mu)$ and each $i\in I$, it holds
\begin{equation*}
    \mathbb{P}\left( x_{n}\in A\, \middle\vert\, u_{n}\in\mathcal{B}_{\e/4}(m_i)\right) = \mu(A)\quad\text{for all }n\in\N.
\end{equation*}
Thus, by \Cref{rem:SequenceDense}, $\big(x_{n_t^{(i)}}\big)_{t\in\N}$ is dense in $\supp(\mu)$ with probability 1 for all $i\in I$.
By \Cref{lem:ErstesResultat}, we can find $K_i\in\N$ such that
\begin{equation}\label{eq:AbstandKlein}
    \min_{t\in\{1,\ldots,K_i\}} \big\Vert x-x_{n^{(i)}_t}\big\Vert_\sX < \tfrac{\e}{2}\quad\text{ for all }x\in\supp(\mu).
\end{equation}
Define 
\begin{equation*}
    N_2 := \max_{i\in I}\max_{t\in\{1,\ldots,K_i\}} n^{(i)}_t
\end{equation*}
as well as $N := \max (N_1,N_2)$. Notice that this definition of $N$ implies for all $i\in I$ and all $n>N$
\begin{equation*}
    \left\{ n^{(i)}_t \, :\, t\in \{ 1,\ldots, K_i\}\right\} \subseteq \{ 1,\ldots,n\}.
\end{equation*}
By \eqref{eq:NotInBall}, for all $n>N$ there exists $i\in I$ such that
\begin{equation*}
    \Vert u_n - m_i\Vert_\sU < \tfrac{\e}{4}. 
\end{equation*}
Now, given $x\in\supp(\mu)$ and $n> N$, we choose $j\in I$ such that $u_n\in \mathcal{B}_{\varepsilon/4}(m_j)$. Thus, it holds
\begin{equation*}
    \begin{aligned}
    Z_n(x) &= \min_{k\in\{1,\ldots,n\}}\big(\Vert u_n-u_k\Vert_\sU + \Vert x-x_k\Vert_\sX\big) \\
        &\le \min_{t\in\{1,\ldots,K_j\}} \Big( \big\Vert u_n - u_{n^{(j)}_t}\big\Vert_\sU + \big\Vert x- x_{n^{(j)}_t}\big\Vert_\sX\Big) \\
        &\le \min_{t\in\{1,\ldots,K_j\}} \Big( \Vert u_n - m_j\Vert_\sU + \big\Vert m_j - u_{n^{(j)}_t}\big\Vert_\sU + \big\Vert x- x_{n^{(j)}_t}\big\Vert_\sX\Big) \\
        &< \tfrac{\e}{4} + \tfrac{\e}{4} + \tfrac{\e}{2} = \e,
    \end{aligned}
\end{equation*}
where we used \eqref{eq:AbstandKlein} and $u_n,u_{n^{(j)}_t}\in\mathcal{B}_{\e/4}(m_j)$ in the last line.
\end{proof}
\begin{lemma}[Approximation Results for $J$ and $\nabla J$]\label{lem:GradientError}
The approximation errors for $\nabla J$ and $J$ vanish in the course of the iterations, i.e.,
\begin{equation*}
    \left\Vert \hat{G}_n - \nabla J(u_n)\right\Vert \to 0 \quad\text{and}\quad \left\Vert \hat{J}_n - J(u_n)\right\Vert \to 0 \qquad \text{for } n\to\infty.
\end{equation*}

\end{lemma}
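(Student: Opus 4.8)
The plan is to reduce both estimates to a single mechanism: the CSG weights turn $\hat{G}_n$ and $\hat{J}_n$ into \emph{integrals} of $\nabla_1 j$ and $j$ evaluated at previously sampled arguments which, by \Cref{lem:ZweitesResultat}, are uniformly close to $(u_n,x)$. Uniform continuity of the integrands on a compact set then forces the difference to the true quantities $\nabla J(u_n)$ and $J(u_n)$ below any prescribed $\e$. Since the argument is identical for the gradient and the objective, I would carry it out in full only for $\hat{G}_n$ and merely indicate the obvious modification for $\hat{J}_n$.

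First I would recall the structure of the weights computed in Step 4 of \Cref{alg:GeneralCSG} (see \cite[Section 3.1]{pflug2021csg}): for each $x\in\supp(\mu)$ let
\[
    k_n(x) := \argmin_{k\in\{1,\ldots,n\}}\big(\Vert u_n-u_k\Vert_\sU + \Vert x-x_k\Vert_\sX\big)
\]
be the index attaining $Z_n(x)$ from \Cref{lem:ZweitesResultat}, and let $M_k^{(n)} := \{x\in\supp(\mu):k_n(x)=k\}$ be the associated nearest-neighbour cells. The weights are precisely $\alpha_k=\mu(M_k^{(n)})$, so that $\sum_{k=1}^n\alpha_k=\mu(\supp(\mu))=1$ and, since $\nabla_1 j(u_{k_n(x)},x_{k_n(x)})$ is constant on each cell,
\[
    \hat{G}_n = \sum_{k=1}^n \alpha_k \nabla_1 j(u_k,x_k) = \int_{\supp(\mu)} \nabla_1 j\big(u_{k_n(x)},x_{k_n(x)}\big)\,\mu(\dd x).
\]
Writing also $\nabla J(u_n)=\int_{\supp(\mu)}\nabla_1 j(u_n,x)\,\mu(\dd x)$, the difference collapses to the single integral
\[
    \hat{G}_n - \nabla J(u_n) = \int_{\supp(\mu)} \Big[\nabla_1 j\big(u_{k_n(x)},x_{k_n(x)}\big) - \nabla_1 j(u_n,x)\Big]\,\mu(\dd x).
\]

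Next I would exploit regularity. Since $\U$ is compact (\Cref{assum:Nummer2}) and $\supp(\mu)$ is compact, the product $\U\times\supp(\mu)\subset\U\times\X$ is compact, and on it $\nabla_1 j$ is uniformly continuous because $j\in C^1$. Hence for the given $\e$ there is $\delta>0$ such that $\Vert u-\tilde u\Vert_\sU+\Vert x-\tilde x\Vert_\sX<\delta$ implies $\Vert\nabla_1 j(u,x)-\nabla_1 j(\tilde u,\tilde x)\Vert_\sU<\e$ for all such arguments. Applying \Cref{lem:ZweitesResultat} with this $\delta$ yields $N$ with $Z_n(x)=\Vert u_n-u_{k_n(x)}\Vert_\sU+\Vert x-x_{k_n(x)}\Vert_\sX<\delta$ for all $n>N$ and all $x\in\supp(\mu)$. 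Bounding the norm of the integral by the integral of the norm then gives $\Vert\hat{G}_n-\nabla J(u_n)\Vert_\sU\le\int_{\supp(\mu)}\e\,\mu(\dd x)=\e$ for all $n>N$, and the identical estimate with $j$ in place of $\nabla_1 j$ settles $\hat{J}_n$.

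I expect the main obstacle to lie in the first step rather than in the analysis: making precise the integral representation of $\hat{G}_n$, since it hinges on the exact definition of the weights, which is only referenced here from \cite{pflug2021csg}. One must verify that the chosen weight scheme indeed realises (or sufficiently approximates) the nearest-neighbour cell measures $\mu(M_k^{(n)})$, and that ties in the $\argmin$ occur only on $\mu$-null sets, so that the $M_k^{(n)}$ genuinely partition $\supp(\mu)$. Once this identity is secured, the remaining compactness-plus-density argument is routine and, crucially, uniform in $x$, which is exactly what \Cref{lem:ZweitesResultat} was designed to provide.
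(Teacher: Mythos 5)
Your core analytic mechanism is sound, but note that it is genuinely different from the paper's route: the paper does not re-derive the error bound at all. It cites from \cite{pflug2021csg} the ready-made estimate
\begin{equation*}
    \bigl\Vert \hat{G}_n-\nabla J(u_n)\bigr\Vert \le L\Bigl(\tfrac{1}{n}\textstyle\sum_{i=1}^{n} Z_n(x_i)+d_W(\mu_n,\mu)\Bigr),
\end{equation*}
and then shrinks both terms, the first via \Cref{lem:ZweitesResultat} and the second via Wasserstein convergence of the empirical measure $\mu_n$ to $\mu$ \cite{EmpiricalDistrPaper}; the estimate for $\hat{J}_n$ follows by swapping $L$ for the Lipschitz constant of $J$. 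You instead reconstruct the bound from the definition of the weights: an integral representation of $\hat{G}_n$, uniform continuity of $\nabla_1 j$ on the compact set $\U\times\supp(\mu)$, and the uniform smallness of $Z_n$. What your version buys is self-containedness: no Wasserstein distance, no Lipschitz constant of $\nabla_1 j$ (only $j\in C^1$ plus compactness), and an estimate that is uniform in $x$, exactly as \Cref{lem:ZweitesResultat} provides.

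The gap sits precisely where you flagged it, and it is not cosmetic. The identity $\alpha_k=\mu\bigl(M_k^{(n)}\bigr)$ describes only the \emph{exact} weight scheme of \cite{pflug2021csg}, and only for that scheme does $\hat{G}_n=\int_{\supp(\mu)}\nabla_1 j\bigl(u_{k_n(x)},x_{k_n(x)}\bigr)\mu(\dd x)$ hold. The lemma, however, is stated for the general \Cref{alg:GeneralCSG}, whose weights may also be the \emph{empirical} ones, $\alpha_k=\tfrac{1}{n}\#\bigl\{i\le n:\,k_n(x_i)=k\bigr\}$ — and these are what the paper actually runs in \Cref{sec:Numerical}. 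For empirical weights your representation holds with the empirical measure $\mu_n=\tfrac{1}{n}\sum_{i\le n}\delta_{x_i}$ in place of $\mu$, so the difference $\hat{G}_n-\nabla J(u_n)$ acquires the extra term $\int \nabla_1 j\bigl(u_{k_n(x)},x_{k_n(x)}\bigr)\,(\mu_n-\mu)(\dd x)$, which uniform continuity and $Z_n\to0$ cannot control: it is small only because $\mu_n\to\mu$ (in Wasserstein distance, almost surely), and via Kantorovich--Rubinstein duality this is exactly the $L\, d_W(\mu_n,\mu)$ term in the paper's cited bound. So your argument proves the lemma for exact weights; to cover the algorithm as actually used you must add this empirical-measure convergence ingredient, at which point the proof essentially merges with the paper's. (Two minor points: ties in the $\argmin$ need no null-set discussion, since any measurable tie-breaking rule makes the cells a genuine partition; and your final bound is $\le\e$ rather than $<\e$, which is harmless after halving $\e$.)
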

\begin{proof}
Denote by $\nu_n$ the measure corresponding to the integration weights according to \eqref{eq:assumMeasure1}. For $x\in\supp(\mu)$, we define the closest index in the current iteration $k^n(x)$ as
\begin{equation*}
    k^n(x) := \argmin_{k = 1,\ldots,n} \big( \Vert u_n - u_k\Vert_\sU + \Vert x - x_k\Vert_\sX\big),
\end{equation*}
i.e., we have
\begin{equation*}
    \Vert u_n - u_{k^n(x)}\Vert_\sU + \Vert x - x_{k^n(x)}\Vert_\sX = Z_n(x).
\end{equation*}
Now, it holds
\begin{align*}
    \big\Vert \hat{G}_n - &\nabla J(u_n)\big\Vert \\
    &= \left\Vert \sum_{i=1}^n \int_\X \delta_{k^n(x)}(i) \nabla_1 j(u_i,x_i)\nu_n (\mathrm{d}x) -\int_\X \nabla_1 j(u_n,x)\mu(\mathrm{d}x)\right\Vert \\
    &\le \left\Vert \int_\X \Big(\sum_{i=1}^n \delta_{k^n(x)}(i)\nabla_1 j(u_i,x_i) - \nabla_1 j(u_n,x)\Big)\nu_n(\mathrm{d}x)\right\Vert \\
    &\quad + \left\Vert \int_\X \nabla_1 j(u_n,x)\nu_n(\mathrm{d}x) - \int_\X \nabla_1 j(u_n,x)\mu(\mathrm{d}x)\right\Vert \\
    &\le L_j \int_\X Z_n(x)\nu_n(\mathrm{d}x) + \left\Vert \int_\X \nabla_1 j(u_n,x)\nu_n(\mathrm{d}x) - \int_\X \nabla_1 j(u_n,x)\mu(\mathrm{d}x)\right\Vert,
\end{align*}
where $L_j$ is the Lipschitz constant of $\nabla_1 j$ as defined in \Cref{assum:RegularitySmallj}. 

First, since $Z_n$ is uniformly (in $n$) Lipschitz continuous, we obtain
\begin{align*}
    \int_\X Z_n(x)\nu_n(\mathrm{d}x) &= \int_\X Z_n(x)\mu(\mathrm{d}x) + \int_\X Z_n(x)\nu_n(\mathrm{d}x) - \int_\X Z_n(x)\mu(\mathrm{d}x) \\
    &\le \int_\X Z_n(x)\mu(\mathrm{d}x) + L_Z\cdot d_W(\nu_n,\mu).
\end{align*}
Here, $L_Z$ corresponds to the Lipschitz constant of $Z_n$ and $d_W$ denotes the Wasserstein distance of the measure $\nu_n$ and $\mu$. By \Cref{assum:Nummer2}, $\X$ is bounded and by \Cref{assum:Weights}, we have $\nu_n\Rightarrow\mu$. Thus, \cite[Theorem 6]{gibbs2002choosing} yields $d_W(\nu_n,\mu)\to0$. Additionally, since $Z_n$ is bounded and converges pointwise to 0 (see \Cref{lem:ZweitesResultat}), we use Lebesgue's dominated convergence theorem and conclude
\begin{equation*}
    \int_\X Z_n(x)\mu(\mathrm{d}x) \to 0 \quad\text{for }n\to\infty.
\end{equation*}
For the second part, let $Q_n$ be an arbitrary coupling of $\nu_n$ and $\mu$, i.e., $Q_n(\cdot\times\X) = \nu_n$ and $Q_n(\X\times \cdot)=\mu$. Utilizing the Lipschitz continuity of $\nabla_1 j$ (\Cref{assum:RegularitySmallj}) once again, we obtain
\begin{align*}
    &\left\Vert \int_\X \nabla_1 j(u_n,x)\nu_n(\mathrm{d}x) - \int_\X \nabla_1 j(u_n,x)\mu(\mathrm{d}x)\right\Vert \\
    &\qquad \le \left\Vert \int_{\X\times\X} \big( \nabla_1 j(u_n,x) - \nabla_1 j(u_n,y)\big) Q_n(\mathrm{d}(x,y))\right\Vert \\
    &\qquad \le L_j \int_{\X\times\X} \Vert x-y\Vert_\sX Q_n(\mathrm{d}(x,y)).
\end{align*}
Denote the set of all couplings of $\nu_n$ and $\mu$ by $\mathbf{Q}$. Since $Q_n$ was arbitrary, it holds 
\begin{align*}
    &\left\Vert \int_\X \nabla_1 j(u_n,x)\nu_n(\mathrm{d}x) - \int_\X \nabla_1 j(u_n,x)\mu(\mathrm{d}x)\right\Vert \\
    &\qquad \le L_j\cdot \inf_{Q_n\in\mathbf{Q}} \int_{\X\times\X} \Vert x-y\Vert_\sX Q_n(\mathrm{d}(x,y)) \\
    &\qquad = L_j\cdot d_W(\nu_n,\mu) \to 0 \quad\text{for }n\to\infty,
\end{align*}
finishing the proof of $\left\Vert \hat{G}_n - \nabla J(u_n)\right\Vert \to 0$. The second part of the claim follows analogously. 

\end{proof}
As a final remark before starting the convergence analysis, we want to give further details on the class of problems that can be solved by the CSG algorithm.
\begin{remark}[Generalized Setting]\label{rem:GeneralizedSetting}
Suppose that, in addition to $\U,\X$ and $J$ as defined in the introduction, we are given a convex set $\mathcal{V}\subset\R^{d_1}$ for some $d_1\in\N$ and a continuously differentiable function $F:\mathcal{V}\times\R\to\R$. Now, if we consider the optimization problem
\begin{align}
    \min_{(u,v)\in\U\times\mathcal{V}}&\quad F(v,J(u)),\label{eq:FirstPertubedProblem}
\end{align}
the gradient of the objective function with respect to $(u,v)$ is given by
\begin{equation*}
    \nabla F(v,J(u)) = \begin{pmatrix} \nabla_1 F(v,J(u)) \\ \nabla J(u)\partial_2 F(v,J(u))\end{pmatrix}. 
\end{equation*}
It is a direct consequence of \Cref{lem:GradientError}, that 
\begin{equation*}
    \left\Vert \begin{pmatrix} \nabla_1 F(v_n,J(u_n)) \\ \hat{G}_n \partial_2 F(v_n,\hat{J}_n) \end{pmatrix} - \nabla F(v_n,J(u_n))\right\Vert \to 0\quad \text{for }n\to\infty.
\end{equation*}
Thus, we can use the CSG method to solve \eqref{eq:FirstPertubedProblem} and all our convergence results carry over to this setting, as long as the new objective function satisfies \Cref{assum:RegularitySmallj}.

Furthermore, let $\mathcal{Y}\subset\R^{d_2}$ for some $d_2\in\N$. Assume that we are given a probability measure $\nu$ such that the pair $(\mathcal{Y},\nu)$ satisfies the same assumptions we imposed on $(\X,\mu)$ and consider the optimization problem
\begin{equation}
    \min_{(u,v)\in\U\times\mathcal{V}}\quad \int_\mathcal{Y} \tilde{F}(v,J(u),y) \nu(\mathrm{d}y). \label{eq:SecondPertubedProblem}
\end{equation}
Again, the gradient of this objective function can be approximated by the CSG method, if $\tilde{F}:\mathcal{V}\times\R\times\mathcal{Y}\to\R$ is Lipschitz continuously differentiable. 

It is clear that we can continue to wrap around functions or expectation values in these fashions indefinitely. Therefore, we see that the scope of the CSG method is far larger than problems like \eqref{eq:Problemstellung} and includes many settings, which stochastic gradient descent methods can not handle, like nested expected values, tracking of expected values and many more.
\end{remark}
\subsection{Example for a Composite Objective Function}\label{subsec:ExampleGeneralizedSetting}
To study the performance of CSG in the generalized setting, we consider an optimization problem in which the objective function is not of the type \eqref{eq:Problemstellung}, but instead falls in the broader class of possible settings mentioned in \Cref{rem:GeneralizedSetting}. Thus, we introduce the sets $\U = [0,10]$, $\X = (-1,1)$ and $\mathcal{Y}=(-3,3)$ and define the optimization problem
\begin{equation}
    \min_{u\in\U}\quad \frac{1}{20}\int_{\mathcal{Y}}\Big( 2y+5\int_\X \cos\big(\tfrac{u-x}{\pi}\big)\mathrm{d}x\Big)^2\mathrm{d}y.\label{eq:SCGDComp}
\end{equation}
The optimal solution $u^\ast=\tfrac{\pi^2}{2}$ to \eqref{eq:SCGDComp} can be found analytically. The nonlinear fashion in which the inner integral over $\X$ enters the objective function prohibits us from using SG-type methods to solve \eqref{eq:SCGDComp}. There is, however, the possibility to use the stochastic compositional gradient descent method (SCGD), which was proposed in \cite{SCGDPaper} and is specifically designed for optimization problems of the form \eqref{eq:SCGDComp}. Each SCGD iteration consists of two main steps: The inner integral is approximated by samples using iterative updating with a slow-diminishing step size. This approximation is then used to carry out a stochastic gradient descent step with a fast-diminishing step size.

For numerical comparison, we choose 1000 random starting points in $[\tfrac{11}{2},\tfrac{19}{2}]$, i.e., the right half of $\U$. In our tests, the accelerated SCGD method (see \cite{SCGDPaper}) performed better than basic SCGD, mainly since the objective function of \eqref{eq:SCGDComp} is strongly convex in a neighborhood of $u^\ast$. Thus, we compare the results obtained by CSG to the aSCGD algorithm, for which we chose the optimal step sizes according to \cite[Theorem 7]{SCGDPaper}. For CSG, we chose a constant step size $\tau = \tfrac{1}{30}$, which represents a rough approximation to the inverse of the Lipschitz constant $L$. The results are given in \Cref{fig:SCGDCSG}.

Furthermore, we are interested in the number of steps each method has to perform such that the distance to $u^\ast$ lies (and stays) within a given tolerance. Thus, we also analyzed the number of steps after which the different methods obtain a result within a tolerance of $10^{-1}$ in at least 90\% of all runs. The results are shown in \Cref{fig:SCGDCSG2}.
\begin{figure}[htp]
    \centering
    \begin{minipage}[b][][c]{0.48\textwidth}
        \centering
        \includegraphics[width = \textwidth]{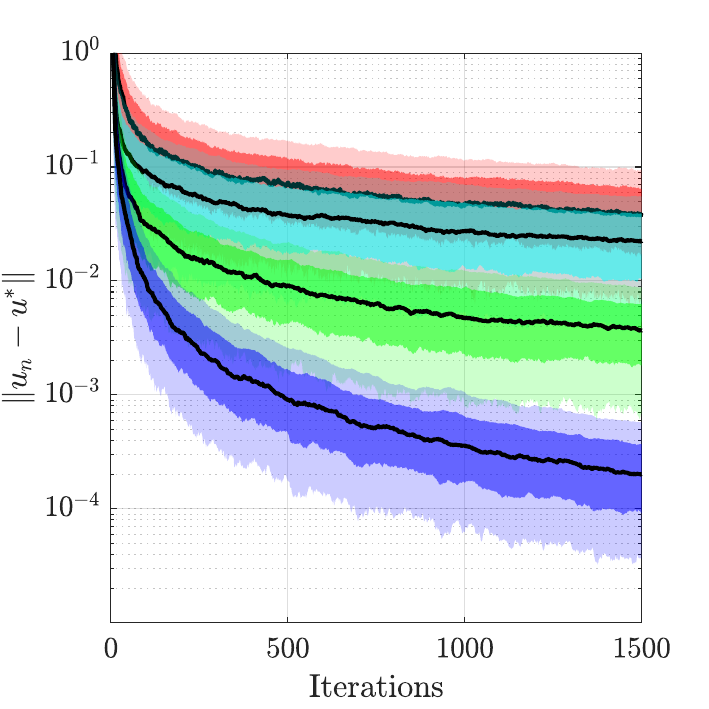}
    \end{minipage}\hfill
    \begin{minipage}[b][][c]{0.48\textwidth}
        \centering
        \includegraphics[width = \textwidth]{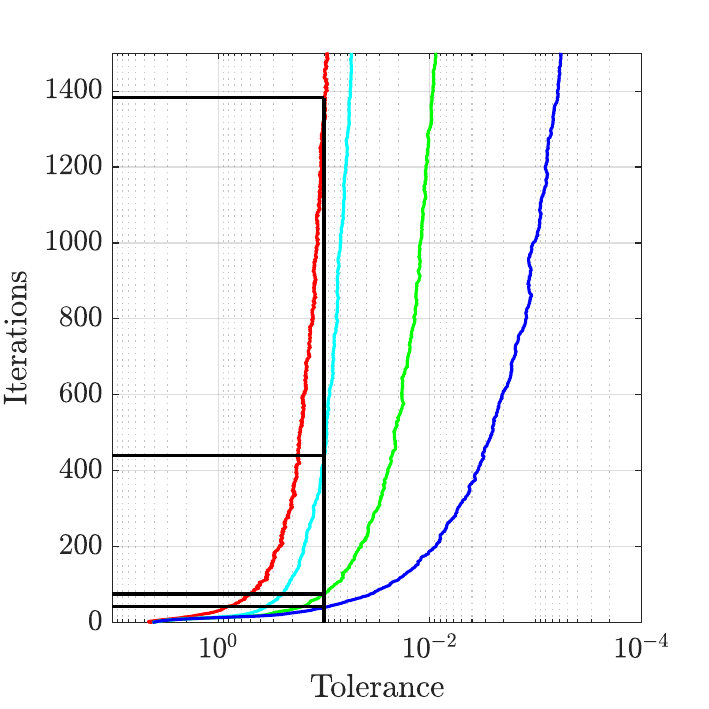}%
    \end{minipage}
    \par
    \begin{minipage}[t][][c]{0.48\textwidth}
        \caption{Absolute error $\Vert u_n-u^\ast\Vert$ during the optimization process. From top to bottom: aSCGD (red), CSG with empirical weights (cyan), CSG with inexact hybrid weights with $f(n) = \lfloor n^{1.5}\rfloor$ (green) and CSG with exact hybrid weights (blue). The shaded areas indicate the quantiles $P_{0.1,0.9}$ (light) and $P_{0.25,0.75}$ (dark).}
        \label{fig:SCGDCSG}
    \end{minipage}\hfill
    \begin{minipage}[t][][c]{0.48\textwidth}
        \caption{Minimum number of steps needed for the different algorithms such that at least 90\% of the runs achieve an absolute error smaller than the given tolerance. the colors are chosen in the same fashion as in \Cref{fig:SCGDCSG}. The exact numbers for a tolerance of $10^{-1}$ are 42 (exact hybrid), 76 (inexact hybrid), 440 (empirical) and 1382 (aSCGD).}
        \label{fig:SCGDCSG2}
    \end{minipage}
\end{figure}

\section{Convergence Results For Constant Step Size}\label{sec:ConvergenceConstant}
Our first result considers the special case in which the objective function $J$ appearing in \eqref{eq:Problemstellung} has only finitely many stationary points on $\U$. The proof of this result serves as a prototype for the later convergence results, as they share a common idea.
\begin{theorem}[Convergence for Constant Steps]\label{theo:ConstantStepConvergence}
Assume that $J$ has only finitely many stationary points on $\U$. 

Then CSG with a positive constant step size $\tau_n=\tau<\tfrac{2}{L}$ converges to a stationary point of $J$.
\end{theorem}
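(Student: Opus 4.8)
The plan is to combine the descent lemma with the approximation results of \Cref{lem:GradientError} to show that the iterates make genuine progress in the objective function, then exploit the assumption of finitely many stationary points to upgrade convergence of $\|u_{n+1}-u_n\|\to 0$ into convergence of the whole sequence via \Cref{lem:Convergence}. First I would apply \Cref{lem:descent lemma} with $u_1=u_{n+1}$ and $u_2=u_n$ to obtain
\[
J(u_{n+1})\le J(u_n)+\nabla J(u_n)^\top(u_{n+1}-u_n)+\tfrac{L}{2}\|u_{n+1}-u_n\|^2.
\]
The key trick is to write $\nabla J(u_n)=\hat G_n+\big(\nabla J(u_n)-\hat G_n\big)$, so that the inner product splits into a term $\hat G_n^\top(u_{n+1}-u_n)$ that \Cref{lem: char property} bounds by $-\tfrac{1}{\tau}\|u_{n+1}-u_n\|^2$, plus an error term controlled by $\|\nabla J(u_n)-\hat G_n\|\,\|u_{n+1}-u_n\|$. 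By \Cref{lem:GradientError}, for any $\e>0$ the approximation error $\|\nabla J(u_n)-\hat G_n\|$ is eventually below $\e$, so for $n$ large this yields a descent inequality of the form
\[
J(u_{n+1})\le J(u_n)-\Big(\tfrac{1}{\tau}-\tfrac{L}{2}\Big)\|u_{n+1}-u_n\|^2+\e\|u_{n+1}-u_n\|.
\]

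Since $\tau<\tfrac{2}{L}$, the coefficient $c:=\tfrac{1}{\tau}-\tfrac{L}{2}$ is strictly positive. Because $J$ is continuous on the compact set $\U$, it is bounded below, so summing the descent inequality over $n$ forces $\|u_{n+1}-u_n\|\to 0$: the telescoping sum of $J(u_n)-J(u_{n+1})$ converges, and the linear error term $\e\|u_{n+1}-u_n\|$ can be absorbed by choosing $\e$ small relative to $c$ (for instance completing the square, so that $c\|u_{n+1}-u_n\|^2-\e\|u_{n+1}-u_n\|$ stays nonnegative once the step is large). This simultaneously gives summability of $\|u_{n+1}-u_n\|^2$ and hence $\|u_{n+1}-u_n\|\to 0$. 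The next step is to identify the accumulation points. Let $u^\ast$ be any accumulation point of $(u_n)$ along a subsequence; since $\|u_{n+1}-u_n\|\to 0$ and $\hat G_n\to\nabla J(u_n)$, the projection update $u_{n+1}=\P_\U(u_n-\tau\hat G_n)$ passes to the limit using continuity of $\P_\U$, yielding $\P_\U(u^\ast-\tau\nabla J(u^\ast))=u^\ast$, i.e. $u^\ast\in\mathcal{S}(J)$.

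Finally, by hypothesis $\mathcal{S}(J)$ is finite, so $(u_n)$ has only finitely many accumulation points; combined with $\|u_{n+1}-u_n\|\to 0$ and the boundedness of $(u_n)\subset\U$, \Cref{lem:Convergence} gives convergence of the entire sequence to a single stationary point. I expect the main obstacle to be the careful handling of the error term $\e\|u_{n+1}-u_n\|$: one must ensure that the linear-in-$\|u_{n+1}-u_n\|$ perturbation does not destroy the summability argument, which is why it is essential that $\e$ can be made arbitrarily small \emph{uniformly for all large $n$} (as provided by \Cref{lem:GradientError}) rather than merely tending to zero at an uncontrolled rate. A clean way around this is to fix $\e$ once and for all small enough that $c-\e^2/(4c)>0$ after completing the square, or equivalently to note that for any fixed positive $\e$ the quantity $J(u_n)$ is eventually monotonically decreasing up to a vanishing correction, which suffices to extract $\|u_{n+1}-u_n\|\to 0$. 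A secondary subtlety is justifying the limit of the projection step, which relies on the nonexpansiveness (hence continuity) of $\P_\U$ together with the two simultaneous limits $\|u_{n+1}-u_n\|\to 0$ and $\hat G_n-\nabla J(u_n)\to 0$.
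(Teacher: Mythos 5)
Your overall skeleton (descent lemma plus error splitting, then stationarity of accumulation points, then \Cref{lem:Convergence}) matches the paper's, but the central step of your argument fails. From
\begin{equation*}
J(u_{n+1})-J(u_n)\le -c\,\Vert u_{n+1}-u_n\Vert^2+\e_n\Vert u_{n+1}-u_n\Vert,\quad\text{with } c=\tfrac{1}{\tau}-\tfrac{L}{2},\ \e_n:=\big\Vert\nabla J(u_n)-\hat{G}_n\big\Vert,
\end{equation*}
you cannot conclude eventual monotonicity of $J(u_n)$, nor summability of $\Vert u_{n+1}-u_n\Vert^2$. The right-hand side is \emph{positive} whenever $\Vert u_{n+1}-u_n\Vert<\e_n/c$, and there is no lower bound on the step lengths; on the contrary, near an accumulation point the steps are exactly the small ones, so your phrase ``stays nonnegative once the step is large'' assumes away the problematic regime. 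Completing the square only yields $J(u_{n+1})-J(u_n)\le \e_n^2/(4c)$ per step, and \Cref{lem:GradientError} provides no rate: it gives $\e_n\to0$ but says nothing about $\sum_n\e_n^2$, which may diverge. Hence the cumulative increase over infinitely many such steps need not be finite, and the telescoping argument collapses. Fixing $\e$ ``once and for all'' does not help either: a per-step increase bound of $\e^2/(4c)>0$, summed over infinitely many iterations, is unbounded no matter how small $\e$ is.

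This is precisely the difficulty that forces the paper into its case distinction. The paper splits the iterations into descent steps, where $\e_n\le\tfrac{c}{2}\Vert u_{n+1}-u_n\Vert$ (so the function value genuinely decreases), and the remaining steps, for which $\Vert u_{n+1}-u_n\Vert\le\tfrac{2}{c}\e_n\to0$, so that they can accumulate only at stationary points. If eventually all steps are of the first type, your telescoping argument goes through (the paper's Case 1); if eventually all are of the second type, $\Vert u_{n+1}-u_n\Vert\to0$ holds trivially (Case 2). The genuinely hard case is when both types occur infinitely often: no summation argument works there, and the paper instead proves, via a geometric trapping argument around the finitely many stationary points, that all accumulation points of the non-descent subsequence share one objective value, then that accumulation points of the descent subsequence are stationary as well (Cases 3.1--3.3), and only \emph{at the end} deduces $\Vert u_{n+1}-u_n\Vert\to0$ from the stationarity of all accumulation points --- the reverse of your order of deductions. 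As written, your proof establishes the theorem only in the situation corresponding to the paper's Cases 1 and 2; the missing idea is how to handle the interleaving of increasing and decreasing steps.
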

We want to sketch the proof of \Cref{theo:ConstantStepConvergence}, before going into details. In the deterministic case, \Cref{lem:descent lemma} and \Cref{lem: char property} are used to show that ${J(u_{n+1})\le J(u_n)}$ for all $n\in\N$. It then follows from a telescopic sum argument that $\Vert u_{n+1}-u_n\Vert\to0$, i.e., every accumulation point of $(u_n)_{n\in\N}$ is stationary (compare \cite[Theorem 5.1]{ProjGradientRates} or \cite[Theorem 10.15]{DescentUndProjection}).

In the case of CSG, we can not guarantee monotonicity of the objective function values $(J(u_n))_{n\in\N}$. Instead, we split the sequence into two subsequences. On one of these subsequences, we can guarantee a decrease in function values, while for the other sequence we can not. However, we prove that the latter sequence can accumulate only at stationary points of $J$. 
The main idea is then that $(u_n)_{n\in\N}$ can have only one accumulation point, because ``switching" between several points conflicts with the decrease in function values that must happen for steps in between.
\begin{proof}[Proof of \Cref{theo:ConstantStepConvergence}]
By \Cref{lem:descent lemma} we have
\begin{align*}
    &J(u_{n+1})-J(u_n) \le \nabla J(u_n)^\top \left( u_{n+1}-u_n\right) +\tfrac{L}{2}\Vert u_{n+1}-u_n\Vert_\sU^2 \\
    &\qquad= \hat{G}_n^\top\left(u_{n+1}-u_n\right)+\tfrac{L}{2}\Vert u_{n+1}-u_n\Vert_\sU^2 +\left(\nabla J(u_n)-\hat{G}_n\right)^\top\left(u_{n+1}-u_n\right).
\end{align*}
Utilizing \Cref{lem: char property} and the Cauchy-Schwarz inequality, we now obtain
\begin{align}
    J(u_{n+1}&)-J(u_n) \nonumber\\
    &\le \left(\tfrac{L}{2}-\tfrac{1}{\tau}\right)\Vert u_{n+1}-u_n\Vert_\sU^2+\left\Vert\nabla J(u_n)-\hat{G}_n\right\Vert\cdot\Vert u_{n+1}-u_n\Vert_\sU \nonumber\\
    &= \left( \left(\tfrac{L}{2}-\tfrac{1}{\tau}\right)\Vert u_{n+1}-u_n\Vert_\sU+\left\Vert\nabla J(u_n)-\hat{G}_n\right\Vert\right)\Vert u_{n+1}-u_n\Vert_\sU. \label{eq:AbschFunktionswerte}
\end{align}
Since $\frac{L}{2}-\frac{1}{\tau}<0$, our idea is the following:

Steps that satisfy 
\begin{equation}
    \left\Vert\nabla J(u_n)-\hat{G}_n\right\Vert\le\tfrac{1}{2}\left(\tfrac{1}{\tau}-\tfrac{L}{2}\right)\Vert u_{n+1}-u_n\Vert_\sU, \label{eq:ConditionSteps}
\end{equation}
i.e., steps with small errors in the gradient approximation, will yield decreasing function values. 

On the other hand, the remaining steps will satisfy $\Vert u_{n+1}-u_n\Vert_\sU\to0$, due to $\Vert\nabla J(u_n)-\hat{G}_n\Vert\to0$ (see \Cref{lem:GradientError}). With this in mind, we distinguish three cases: In Case 1, \eqref{eq:ConditionSteps} is satisfied for almost all steps, while in Case 2 it is satisfied for only finitely many steps. In the last case, there are infinitely many steps satisfying and infinitely many steps violating \eqref{eq:ConditionSteps}.
\begin{description}
\item[\textbf{Case 1:}]There exists $N\in\N$ such that 
\begin{equation*}
    \left\Vert\nabla J(u_n)-\hat{G}_n\right\Vert\le\tfrac{1}{2}\left(\tfrac{1}{\tau}-\tfrac{L}{2}\right)\Vert u_{n+1}-u_n\Vert_\sU \quad\text{for all }n\ge N.
\end{equation*}
In this case, it follows from \eqref{eq:AbschFunktionswerte} that $J(u_{n+1})\le J(u_n)$ for all $n\ge N$. Therefore, the sequence $(J(u_n))_{n\in\N}$ is monotonically decreasing for almost every $n\in\N$. Since $J$ is continuous and $\U$ is compact, $J$ is bounded and we therefore have ${J(u_n)\to\bar{J}\in\R}$. Thus, it holds
\begin{equation*}
    -\infty < \bar{J}-J(u_N) = \sum_{n=N}^\infty\big( J(u_{n+1})-J(u_n)\big) \le \tfrac{1}{2}\left(\tfrac{L}{2}-\tfrac{1}{\tau}\right)\sum_{n=N}^\infty\Vert u_{n+1}-u_n\Vert_\sU^2.
\end{equation*}
Since $\frac{1}{2}\left(\frac{L}{2}-\frac{1}{\tau}\right)<0$, we must have $\Vert u_{n+1}-u_n\Vert_\sU\to0$. Let $\left(u_{n_k}\right)_{k\in\N}$ be a convergent subsequence with $u_{n_k}\to\overline{u}\in\U$. 

By \Cref{lem:GradientError}, we have $\hat{G}_{n_k}\to\nabla J(\overline{u})$ and thus
\begin{align*}
    0 &= \lim_{k\to\infty}\Vert u_{n_k +1}-u_{n_k}\Vert_\sU \\
    &= \lim_{k\to\infty}\left\Vert \P_\U\big(u_{n_k}-\tau\nabla \hat{G}_{n_k}\big)-u_{n_k}\right\Vert_\sU \\
    &= \left\Vert \P_\U\big(\overline{u}-\tau\nabla J(\overline{u})\big)-\overline{u}\right\Vert_\sU,
\end{align*}
i.e., every accumulation point of $(u_n)_{n\in\N}$ is stationary. Since $J$ has only finitely many stationary points, \Cref{lem:Convergence} yields the convergence of $(u_n)_{n\in\N}$ to a stationary point of $J$.
\item[\textbf{Case 2:}] There exists $N\in\N$ such that 
\begin{equation*}
    \left\Vert\nabla J(u_n)-\hat{G}_n\right\Vert>\tfrac{1}{2}\left(\tfrac{1}{\tau}-\tfrac{L}{2}\right)\Vert u_{n+1}-u_n\Vert_\sU \quad\text{for all }n\ge N.
\end{equation*}
By \Cref{lem:GradientError}, we have $\Vert\nabla J(u_n)-\hat{G}_n\Vert\to0$. Since $\frac{1}{2}\left(\frac{1}{\tau}-\frac{L}{2}\right)>0$, the above inequality directly implies $\Vert u_{n+1}-u_n\Vert_\sU\to0$. Analogously to Case 1, we conclude that $(u_n)_{n\in\N}$ converges to a stationary point of $J$. 
\item[\textbf{Case 3:}] There are infinitely many $n\in\N$ with
\begin{equation*}
    \left\Vert\nabla J(u_n)-\hat{G}_n\right\Vert\le\tfrac{1}{2}\left(\tfrac{1}{\tau}-\tfrac{L}{2}\right)\Vert u_{n+1}-u_n\Vert_\sU
\end{equation*}
and infinitely many $n\in\N$ with
\begin{equation*}
    \left\Vert\nabla J(u_n)-\hat{G}_n\right\Vert>\tfrac{1}{2}\left(\tfrac{1}{\tau}-\tfrac{L}{2}\right)\Vert u_{n+1}-u_n\Vert_\sU.
\end{equation*}
In this case, we split $(u_n)_{n\in\N}$ disjointly in the two sequences $(u_{a(n)})_{n\in\N}$ and $(u_{b(n)})_{n\in\N}$, such that we have
\begin{equation*}
    \left\Vert\nabla J(u_{a(n)})-\hat{G}_{a(n)}\right\Vert\le\tfrac{1}{2}\left(\tfrac{1}{\tau}-\tfrac{L}{2}\right)\Vert u_{a(n)+1}-u_{a(n)}\Vert_\sU\quad \text{for all }n\in\N
\end{equation*}
and
\begin{equation*}
    \left\Vert\nabla J(u_{b(n)})-\hat{G}_{b(n)}\right\Vert>\tfrac{1}{2}\left(\tfrac{1}{\tau}-\tfrac{L}{2}\right)\Vert u_{b(n)+1}-u_{b(n)}\Vert_\sU\quad \text{for all }n\in\N.
\end{equation*}
We call $(u_{a(n)})_{n\in\N}$ the sequence of descent steps. For $(u_{b(n)})_{n\in\N}$, observe that, as in Case 2, we directly obtain 
\begin{equation}
    \Vert u_{b(n)+1}-u_{b(n)}\Vert_\sU\to0 \label{eq:StepsDecrease}
\end{equation} and every accumulation point of $(u_{b(n)})_{n\in\N}$ is stationary. Therefore, as in the proof of \Cref{lem:Convergence}, for all $\varepsilon>0$ there exists $N\in\N$ such that 
\begin{equation}
    \min_{i\in\{1,\ldots,K\}}\Vert u_{b(n)}-\overline{u}_i\Vert_\sU < \varepsilon\quad \text{for all }n\ge N, \label{eq:Close to acc point}
\end{equation}
where $\overline{u}_1,\ldots,\overline{u}_K$ denote the $K\in\N$ accumulation points of $(u_{b(n)})_{n\in\N}$.\\
\ \\
Now, we prove by contradiction that $J(\overline{u}_1)=J(\overline{u}_2)=\ldots=J(\overline{u}_K)$.

Suppose that this is not the case. Then we have at least $M\ge2$ function values of accumulation points and
\begin{equation*}
    F:=\left\{ J(u)\,:\, u = \overline{u}_1,\ldots,\overline{u}_K\right\} = \{f_1,f_2,\ldots,f_M\}
\end{equation*}
for some $f_1>f_2>\ldots>f_M\in\R$. Now, choose $\varepsilon>0$ small enough, such that 
\begin{equation*}
    2\varepsilon < \min_{\substack{i,j\in\{1,\ldots,K\} \\ i\neq j}} \Vert \bar{u}_i-\bar{u}_j\Vert\quad\text{and}\quad c_{_L}\varepsilon<f_1-f_2,
\end{equation*}
where $c_{_L}$ denotes the Lipschitz constant of $J$. By \eqref{eq:StepsDecrease} and \eqref{eq:Close to acc point}, there exists $N\in\N$ such that for all $n\ge N$ we have
\begin{equation}
    \Vert u_{b(n)+1}-u_{b(n)}\Vert_\sU<\tfrac{\varepsilon}{4} \quad\text{and}\quad\min_{i\in\{1,\ldots,K\}}\Vert u_{b(n)}-\overline{u}_i\Vert_\sU < \tfrac{\varepsilon}{4}.\label{eq:BnBeiHP}
\end{equation}
Therefore, for $n\ge N$ and $i\in\{1,\ldots,K\}$, we have
\begin{align}
    u_{b(n)}\in\mathcal{B}_{\frac{\varepsilon}{4}}(\overline{u}_i) \; &\Longrightarrow\; u_{b(n)+1}\in\mathcal{B}_{\frac{\varepsilon}{2}}(\overline{u}_i) \label{eq:Eps4ZuEps2}\\
    &\Longrightarrow\; u_{b(n)+1}\not\in\mathcal{B}_{\frac{\varepsilon}{4}}(\overline{u}_j) \text{ for all $j\neq i$.} \label{eq:BnRemainsClose}
\end{align}
Especially, for all $n\ge N$ and all $i=1,\ldots, K$ it holds
\begin{equation}
    u_{b(n)}\in\mathcal{B}_{\frac{\varepsilon}{4}}(\overline{u}_i)\;\Longrightarrow\; J(u_{b(n)+1}) \le J(\overline{u}_i)+\tfrac{c_{_L}\varepsilon}{2}. \label{eq:FunctionValueDifference}
\end{equation}
It follows from \eqref{eq:BnBeiHP} and \eqref{eq:BnRemainsClose} that for $n\ge N+1$:
\begin{description}
    \item[(A)] If $u_{b(n)}\in\mathcal{B}_{\frac{\varepsilon}{4}}(\overline{u}_i)$ and $u_{b(n+1)}\in\mathcal{B}_{\frac{\varepsilon}{4}}(\overline{u}_j)$ for some $j\neq i$, then there must be at least one descent step between $u_{b(n)}$ and $u_{b(n+1)}$.
    \item[(B)] If $u_{b(n)}\in\mathcal{B}_{\frac{\varepsilon}{4}}(\overline{u}_i)$ and $u_{b(n)-1}\not\in\mathcal{B}_{\frac{\varepsilon}{4}}(\overline{u}_i)$, then $u_{b(n)-1}$ must be a descent step. 
\end{description}
Observe that (A) follows directly from \eqref{eq:BnBeiHP} and \eqref{eq:BnRemainsClose}, as moving from the vicinity of $\overline{u}_i$ to a neighborhood of $\overline{u}_j$ requires that there is an intermediate step $u_n$ with $\min_{i\in\{1,\ldots,K\}}\Vert u_{b(n)}-\overline{u}_i\Vert_\sU \ge \tfrac{\varepsilon}{4}$. Similarly, (B) is just the second condition in \eqref{eq:BnRemainsClose} reformulated.

Now, let $i\in\{1,\ldots,K\}$ be chosen such that $J(\overline{u}_i)\le f_2$ and let $n_0\ge N$ be chosen such that $u_{b(n_0)}\in\mathcal{B}_{\frac{\varepsilon}{4}}(\overline{u}_i)$ and $u_{b(n_0)+1}\not\in\mathcal{B}_{\frac{\varepsilon}{4}}(\overline{u}_i)$. Using \eqref{eq:Eps4ZuEps2} and \eqref{eq:FunctionValueDifference}, we obtain
\begin{align*}
    J(u_{b(n_0)+1}) &\le J(\overline{u}_i)+\frac{c_{_L}\varepsilon}{2} \\
        &\le f_2+\frac{c_{_L}\varepsilon}{2} \\
        &< f_1-\frac{c_{_L}\varepsilon}{2}\\
        &< J(u)\quad\text{for all }u\in\mathcal{B}_{\frac{\varepsilon}{4}}\left( J^{-1}(\{f_1\})\cap\{\overline{u}_1,\ldots,\overline{u}_K\}\right).
\end{align*}
Therefore, descent steps can never reach $\mathcal{B}_{\frac{\varepsilon}{4}}\left( J^{-1}(\{f_1\}\cap\{\overline{u}_1,\ldots,\overline{u}_K\})\right)$ again! It follows from item (B), that $u_n\not\in\mathcal{B}_{\frac{\varepsilon}{4}}\left( J^{-1}(\{f_1\})\cap\{\overline{u}_1,\ldots,\overline{u}_K\}\right)$ for all ${n\ge b(n_0)}+1$, in contradiction to $J^{-1}(\{f_1\})\cap\{\overline{u}_1,\ldots,\overline{u}_K\}$ consisting of at least one accumulation point of $(u_n)_{n\in\N}$. Hence, we have
\begin{equation}
    J(\overline{u}_1) = \ldots = J(\overline{u}_K) =: \bar{J}.\label{eq:FunctionValuesIdentical}
\end{equation}
Next, we show that every accumulation point of $(u_{a(n)})_{n\in\N}$ is stationary. We prove this by contradiction.\\
Assume there exists a non-stationary accumulation point $\overline{u}$ of $(u_{a(n)})_{n\in\N}$. Observe that
\begin{equation*}
    \min_{i\in\{1,\ldots,K\}}\Vert\overline{u}-\overline{u}_i\Vert_\sU > 0.
\end{equation*}
\begin{description}
    \item[\textbf{Case 3.1:}]$J(\overline{u})<\bar{J}$.
    
    Then, by the same arguments as above, there exists $N\in\N$ and $\e>0$ s.t. 
    \begin{equation*}
    u_n\not\in\bigcup_{i=1}^K\mathcal{B}_{\frac{\varepsilon}{4}}\left(\overline{u}_i\right)\quad\text{for all } n\ge\N.
    \end{equation*}
    This is a contradiction to $\overline{u}_1,\ldots,\overline{u}_K$ being accumulation points of $(u_n)_{n\in\N}$.
    \item[\textbf{Case 3.2:}] $J(\overline{u})>\bar{J}$.
    
    In this case, there exists $N\in\N$ and $\e>0$ such that $u_n\not\in\mathcal{B}_{\frac{\varepsilon}{4}}\left(\overline{u}\right)$ for all $n\ge N$. This is a contradiction to $\overline{u}$ being an accumulation point of $(u_n)_{n\in\N}$.
    \item[\textbf{Case 3.3:}] $J(\overline{u})=\bar{J}$.
    
    Since $\overline{u}$ is an accumulation point of $(u_{a(n)})_{n\in\N}$, there exists a subsequence $(u_{a(n_k)})_{k\in\N}$ with $u_{a(n_k)}\to\overline{u}$. The sequence $(u_{a(n_k)-1})_{k\in\N}$ is bounded and therefore has at least one accumulation point $\overline{u}_{-1}$ and a subsequence $(u_{a(n_{k_t})-1})_{t\in\N}$ with $u_{a(n_{k_t})-1}\to\overline{u}_{-1}$. It follows that
    \begin{align*}
       \P_\U\left(\overline{u}_{-1}-\tau\nabla J(\overline{u}_{-1})\right) &= \lim_{t\to\infty} \P_\U\left(u_{a(n_{k_t})-1}-\tau \hat{G}_{a(n_{k_t})-1}\right) \\
       &= \lim_{t\to\infty} u_{a(n_{k_t})} \\
       &= \overline{u}.
    \end{align*}
    As $\overline{u}$ is not stationary by our assumption, $\overline{u}_{-1}\neq\overline{u}$ and $\overline{u}_{-1}$ is no stationary point of $J$. Thus, \Cref{lem:descent lemma} combined with \Cref{lem: char property} yields
    \begin{equation*}
        J(\overline{u})-J(\overline{u}_{-1})\le \left(\tfrac{L}{2}-\tfrac{1}{\tau}\right)\Vert \overline{u}_{-1}-\overline{u}\Vert_\sU^2 < 0.
    \end{equation*}
    Therefore, $\overline{u}_{-1}$ is an accumulation point of $(u_{a(n)})_{n\in\N}$, which satisfies  ${J(\overline{u}_{-1})>J(\overline{u})=\bar{J}}$. This, however, is impossible, as seen in Case 3.2.
\end{description}
In conclusion, in Case 3, all accumulation points of $(u_n)_{n\in\N}$ are stationary. Thus, on every convergent subsequence we have $\Vert u_{n_k+1}-u_{n_k}\Vert_\sU\to 0$. Since $(u_n)_{n\in\N}$ is bounded, this already implies $\Vert u_{n+1}-u_n\Vert_\sU\to 0$. Now, \Cref{lem:Convergence} yields the claimed convergence of $(u_n)_{n\in\N}$ to a stationary point of $J$.
\end{description}
\end{proof}
The idea of the proof above still applies in the case that $J$ is constant on some parts of $\U$, i.e., $J$ can have infinitely many stationary points. We obtain the following convergence result:
\begin{theorem}\label{cor:ConstantStepConvergence}
Let $\mathcal{S}(J)$ be the set of stationary points of $J$ on $U$ as defined in \Cref{def:StationaryPoints}. Assume that the set
\begin{equation*}
    \mathcal{N}:= \big\{ J(u)\,:\, u\in\mathcal{S}(J)\big\}\subset\R
\end{equation*}
is of Lebesgue-measure zero. Then every accumulation point of the sequence generated by CSG with constant step size $\tau<\tfrac{2}{L}$ is stationary and we have convergence in function values. 
\end{theorem}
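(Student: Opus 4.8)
The plan is to recycle the architecture of the proof of \Cref{theo:ConstantStepConvergence}, keeping the splitting of the iteration into \emph{descent steps} (those satisfying \eqref{eq:ConditionSteps}) and the remaining \emph{error-dominated steps}, but replacing the finiteness argument of Case~3 by a measure-theoretic gap argument built on $\mathcal N$. Throughout I write $e_n:=\Vert\nabla J(u_n)-\hat G_n\Vert$, $s_n:=\Vert u_{n+1}-u_n\Vert$ and $c:=\tfrac1\tau-\tfrac L2>0$. Two elementary facts drive everything: whenever \eqref{eq:ConditionSteps} holds, \eqref{eq:AbschFunktionswerte} gives $J(u_{n+1})\le J(u_n)$, so $J$ can only increase on error-dominated steps; and on an error-dominated step the reverse inequality $e_n>\tfrac{c}{2}s_n$ forces $s_n<\tfrac{2}{c}e_n$, which tends to $0$ by \Cref{lem:GradientError}.

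First I would record an auxiliary fact: along error-dominated steps the function values approach $\mathcal N$. Since the projection is $1$-Lipschitz, $\phi(u):=\Vert\P_\U(u-\tau\nabla J(u))-u\Vert$ satisfies $\phi(u_n)\le s_n+\tau e_n$, and $\phi$ is continuous on the compact set $\U$ with zero set $\mathcal S(J)$. Hence $s_n\to0$ along error-dominated steps yields $\dist(u_n,\mathcal S(J))\to0$, and by Lipschitz continuity of $J$ therefore $\dist\!\big(J(u_n),\mathcal N\big)\to0$ along these steps.

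Next I would prove convergence of the values $J(u_n)$. If only finitely many steps are error-dominated, then $J(u_n)$ is eventually monotonically decreasing and bounded, hence convergent. Otherwise I argue by contradiction: if $\alpha:=\liminf_n J(u_n)<\limsup_n J(u_n)=:\beta$, then since $\mathcal N$ is closed and of measure zero it has empty interior, so I can select an open gap $(p,q)\subset(\alpha,\beta)$ with $(p,q)\cap\mathcal N=\emptyset$. The value sequence lies below $p$ infinitely often and above $q$ infinitely often, hence up-crosses the midlevel $w:=\tfrac{p+q}2$ infinitely often; each up-crossing occurs at an error-dominated step $k$ (only there can $J$ increase) whose increment is, for large indices, smaller than $\tfrac{q-p}4$. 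Consequently $J(u_k)$ itself lies in $(p,q)$ at distance at least $\tfrac{q-p}4$ from $\mathcal N$, contradicting the auxiliary fact that $\dist(J(u_k),\mathcal N)\to0$ along error-dominated steps. Thus $J(u_n)\to\beta$, which is the claimed convergence in function values; in particular every accumulation point of $(u_n)$ has value $\beta$.

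Finally I would upgrade value convergence to stationarity of every accumulation point, reusing the predecessor argument of Case~3.3. Let $\overline u$ be an accumulation point, $u_{n_k}\to\overline u$, and let $\overline u_{-1}$ be an accumulation point of the predecessors $u_{n_k-1}$. Passing to the limit in $u_{n_k}=\P_\U(u_{n_k-1}-\tau\hat G_{n_k-1})$ and using \Cref{lem:GradientError} gives $\P_\U(\overline u_{-1}-\tau\nabla J(\overline u_{-1}))=\overline u$; if $\overline u$ were non-stationary then $\overline u_{-1}\neq\overline u$, and combining \Cref{lem:descent lemma} with \Cref{lem: char property} in the limit yields $J(\overline u_{-1})>J(\overline u)=\beta$, contradicting that the accumulation point $\overline u_{-1}$ also has value $\beta$. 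Hence every accumulation point is stationary (and consequently $s_n\to0$). I expect the genuine obstacle to be the value-convergence step: translating the ``common value'' mechanism of the finite case into the gap argument, and in particular pinning down that error-dominated steps both carry all the increases of $J$ and keep the values near $\mathcal N$, is precisely where the measure-zero hypothesis on $\mathcal N$ enters in an essential way.
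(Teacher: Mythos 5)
Your proof is correct, and it diverges from the paper's argument at both points where the measure-zero hypothesis enters, so it is a genuinely different route built on the same descent/error-dominated splitting. The paper keeps the entire Case-3 machinery of \Cref{theo:ConstantStepConvergence}: it proves (R1) -- all accumulation points of the error-dominated subsequence share one function value -- by iterating the ball-hopping facts (A)/(B) to manufacture stationary points with intermediate values, so that $\mathcal{N}$ would contain a dense subset of a nontrivial interval and hence, being closed, the whole interval, contradicting measure zero; it then proves (R2) by rerunning Cases 3.1--3.3 for the descent subsequence. You instead prove convergence of the whole value sequence directly: your estimate $\phi(u_n)\le s_n+\tau e_n$ upgrades ``accumulation points of error-dominated steps are stationary'' to the quantitative statement $\dist\big(J(u_n),\mathcal{N}\big)\to 0$ along error-dominated steps, and your gap argument uses closedness plus measure zero in the complementary direction, extracting an interval $(p,q)$ disjoint from $\mathcal{N}$ and showing the values cannot climb across it, because every up-crossing of the midlevel is an error-dominated step with vanishing increment, hence a value trapped mid-gap, bounded away from $\mathcal{N}$. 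With value convergence in hand, stationarity of every accumulation point follows from the predecessor argument alone (the paper's Case 3.3), so Cases 3.1/3.2 and the (A)/(B) bookkeeping become unnecessary; this is a genuine streamlining, at the price of introducing the auxiliary quantitative fact rather than reusing the combinatorial statements already proven for \Cref{theo:ConstantStepConvergence}. Two details you should make explicit when writing this up: first, the existence of the gap requires that $\mathcal{N}$ is \emph{closed}, not merely of empty interior (a dense measure-zero set such as $\mathbb{Q}$ admits no gap); closedness holds because $\mathcal{S}(J)$ is the zero set of the continuous map $u\mapsto\Vert\P_\U\left(u-\tau\nabla J(u)\right)-u\Vert$ on the compact set $\U$, whence $\mathcal{N}=J(\mathcal{S}(J))$ is compact. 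Second, identifying this zero set with $\mathcal{S}(J)$ from \Cref{def:StationaryPoints} uses that the fixed-point condition $\P_\U\left(u-t\nabla J(u)\right)=u$ is independent of $t>0$, being equivalent to the variational inequality $\nabla J(u)^\top(v-u)\ge 0$ for all $v\in\U$; the paper uses this fact tacitly as well, but your argument leans on it twice (for the zero-set identification and for the closedness of $\mathcal{N}$), so it deserves a line.
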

\begin{remark} Comparing \Cref{theo:ConstantStepConvergence} and \Cref{cor:ConstantStepConvergence}, observe that under the weaker assumptions on the set of stationary points of $J$, we no longer obtain convergence for the whole sequence of iterates. To illustrate why that is the case, consider the function $J:\R^\ddes\to\R$ given by $J(u)=\cos(\pi\Vert u\Vert_2^2)$ and $\U=\{ u\in\R^\ddes\,:\,\Vert u\Vert_2^2\le \frac{3}{2}\}$. Then, $\mathcal{S}(J)=\{0\}\cup\{u\in\U\,:\,\Vert u\Vert_2 =1\}$, i.e., every point on the unit sphere is stationary. Thus, we can not use \Cref{lem:Convergence} at the end of the proof to obtain convergence of $(u_n)_{n\in\N}$. Theoretically, it might happen that the iterates $(u_n)_{n\in\N}$ cycle around the unit sphere, producing infinitely many accumulation points, all of which have the same objective function value. This, however, did not occur when testing this example numerically.
\end{remark}
\begin{remark}\label{rem:Sard}
While the assumption in \Cref{cor:ConstantStepConvergence} seems unhandy at first, there is actually a rich theory concerning such properties. For example, Sard's Theorem \cite{SardTheo} and generalizations \cite{SardAllgemein} give that the assumption holds if $J\in C^\ddes$ and $\U$ has smooth boundary. Even though it can be shown that there exist functions, which do not satisfy the assumption (e.g. \cite{AntiSard1, AntiSard2}), such counter-examples need to be precisely constructed and will most likely not appear in any application.
\end{remark}

\begin{proof}[Proof of \Cref{cor:ConstantStepConvergence}]
Proceeding analogously as in the proof of \Cref{theo:ConstantStepConvergence}, we only have to adapt two intermediate results in Case 3: 
\begin{description}
    \item[(R1)] The objective function values of all accumulation points of $(u_{b(n)})_{n\in\N}$ are equal.
    \item[(R2)] Every accumulation point of $(u_{a(n)})_{n\in\N}$ is stationary.
\end{description}
Assume first, that (R1) does not hold. Then there exist two stationary points ${\overline{u}_1\neq\overline{u}_2}$ with $J(\overline{u}_1)< J(\overline{u}_2)$. Now, (A) and (B) shown in the proof of \Cref{theo:ConstantStepConvergence} yield that there must exist an accumulation point $\overline{u}_3$ of $(u_{b(n)})_{n\in\N}$, i.e., a stationary point, with $J(\overline{u}_1)<J(\overline{u}_3)<J(\overline{u}_2)$. Iterating this procedure, we conclude that the set $\mathcal{N}\cap\big[ J(\overline{u}_1),J(\overline{u}_2)\big]$ is dense in $\big[ J(\overline{u}_1),J(\overline{u}_2)\big]$. 

By continuity of $u\mapsto\P_\U\big(u-\tau\nabla J(u)\big)-u$ and compactness of $\U$, we see that 
\begin{equation*}
    \mathcal{N}\cap\big[ J(\overline{u}_1),J(\overline{u}_2)\big] = \big[ J(\overline{u}_1),J(\overline{u}_2)\big],
\end{equation*}
contradicting our assumption that $\lambda(\mathcal{N})=0$.

For (R2), assume that $(u_{a(n)})_{n\in\N}$ has a non-stationary accumulation point $\overline{u}$. Since $\mathcal{S}(J)$ is compact, it holds 
\begin{equation*}
    \dist(\{\overline{u}\},\mathcal{S}(J)) > 0.
\end{equation*}
Thus, by the same arguments as in Case 3.1, 3.2 and 3.3 within the proof of \Cref{theo:ConstantStepConvergence}, we observe that such a point $\overline{u}$ can not exist. 
\end{proof}

\subsection{Academic Example for Constant Step Size}\label{subsec:ExampleConstSteps}
Define $\U=\left[-\frac{1}{2},\frac{1}{2}\right]$, $\X=\left(-\frac{1}{2},\frac{1}{2}\right)$ and consider the problem
\begin{equation}
    \min_{u\in\U}\; \frac{1}{2}\int_\X (u-x)^2\dd x. \label{eq:ProblemConstSteps}
\end{equation}
It is easy to see that \eqref{eq:ProblemConstSteps} has a unique solution $u^\ast = 0$. Furthermore, the objective function is $L$-smooth (with Lipschitz constant 1) and even strictly convex. Thus, by \Cref{theo:ConstantStepConvergence}, the CSG method with a constant positive step size $\tau < 2$ produces a sequence $(u_n)_{n\in\N}$ that satisfies $u_n\to0$. 

However, even in this highly regular setting, the commonly used basic SG method does not guarantee convergence of the iterates for a constant step size. 

To demonstrate this behavior of both CSG and SG, we draw 2000 random starting points $u_0\in\U$ and compare the iterates produced by CSG and SG with five different constant step sizes ($\tau\in\{0.01,0.1,1,1.9,1.99\}$). The CSG integration weights were calculated using the empirical method, i.e., the computationally cheapest choice. The results are shown in \Cref{fig:ConstSteps}.

As expected, the iterates produced by the SG method do not converge to the optimal solution, but instead remain in a neighborhood of $u^\ast$. The radius of said neighborhood depends on the choice of $\tau$ and decreases for smaller $\tau$, see \cite[Theorem 4.6]{Steps01}.
\begin{figure}
    \centering
    
        {\includegraphics[scale=.43,clip,trim=0 38 5 0]{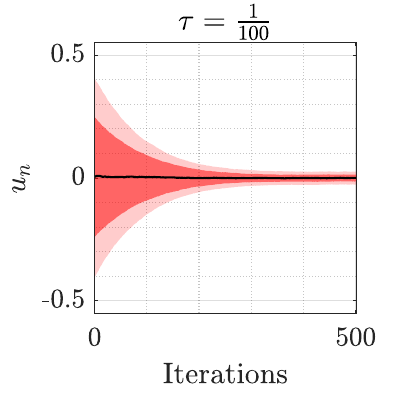}}
    {\includegraphics[scale=.43,clip,trim=41 38 5 0]{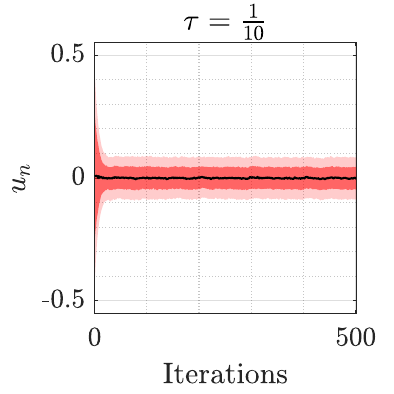}}
    {\includegraphics[scale=.43,clip,trim=41 38 5 0]{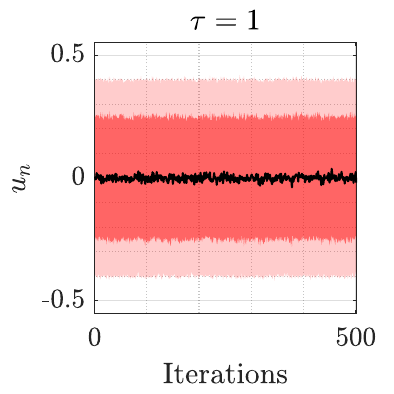}}
    {\includegraphics[scale=.43,clip,trim=41 38 5 0]{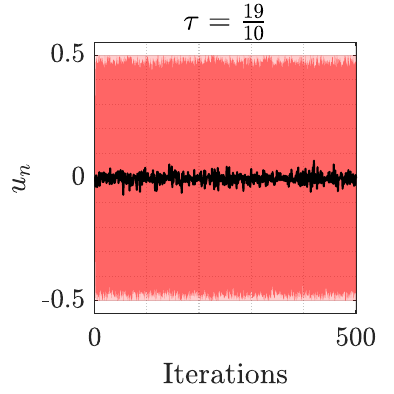}}
    {\includegraphics[scale=.43,clip,trim=41 38 5 0]{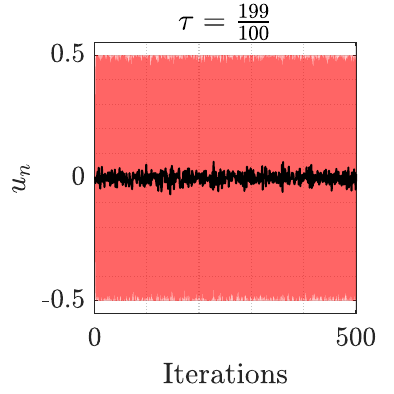}}

    {\includegraphics[scale=.43,clip,trim=0 0 5 20]{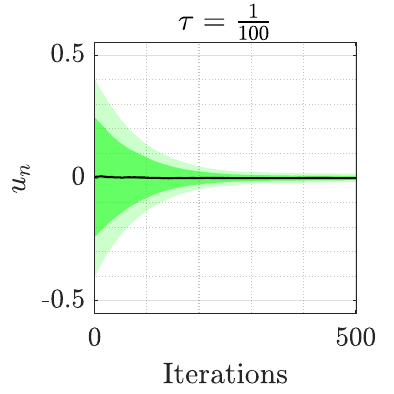}}
    {\includegraphics[scale=.43,clip,trim=41 0 5 20]{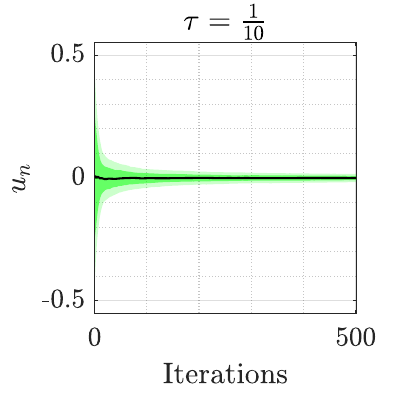}}
    {\includegraphics[scale=.43,clip,trim=41 0 5 20]{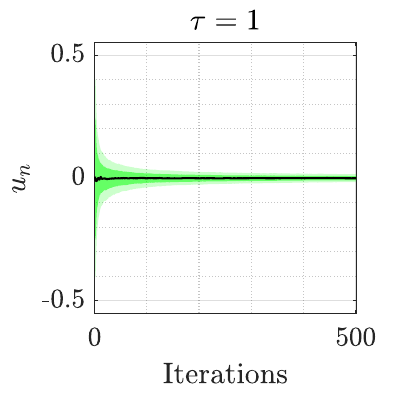}}
    {\includegraphics[scale=.43,clip,trim=41 0 5 20]{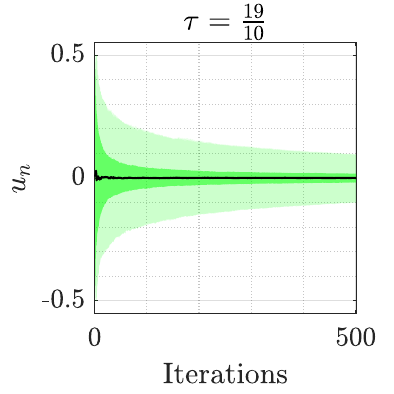}}
    {\includegraphics[scale=.43,clip,trim=41 0 5 20]{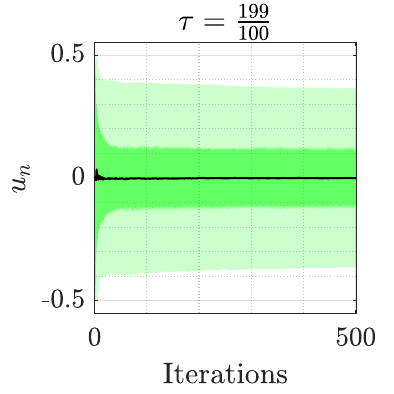}}
    
    \caption{Comparison of the iterates produced by 500 steps of SG (red / first row) and CSG (green / second row) with 2000 random starting points $u_0\in\U$. Both methods have been tested for the five different constant step sizes $\tau\in\{0.01,0.1,1,1.9,1.99\}$ (first to fifth column). The shaded areas indicate the quantiles $P_{0.1,0.9}$ (light) and $P_{0.25,0.75}$ (dark), while the solid line represents the median of the 2000 runs.}
    \label{fig:ConstSteps}
\end{figure}
\section{Backtracking}\label{sec:ConvergenceArmijo}
As $\Vert\hat{G}_n-\nabla J(u_n)\Vert \to 0$ and $\Vert\hat{J}_n- J(u_n)\Vert \to 0$ for $n\to\infty$, we can use these approximations to refine the steplength by a backtracking line search method.

\begin{definition}
For simplicity, we define
\begin{equation*}
    s_n(t):= \P_\U\left(u_n-t\hat{G}_n\right).
\end{equation*}
Furthermore, given $n$ gradient samples $\nabla_1 j(u_i,x_i)$ and $n$ cost function samples $j(u_i,x_i)$, by calculating the weights $\alpha_i^{(n)}(u)$ w.r.t. a given point $u\in\U$, we define
\begin{equation*}
    \tilde{J}_n(u) = \sum_{i=1}^n \alpha_i^{(n)}(u)j(u_i,x_i)\quad\text{ and }\quad\tilde{G}_n(u) = \sum_{i=1}^n \alpha_i^{(n)}(u)\nabla_1j(u_i,x_i),
\end{equation*}
which are approximations to $J(u)$ and $\nabla J(u)$ respectively.
\end{definition}
Based on the well known Armijo-Wolfe conditions from continuous optimization \cite{ArmijoCondition,WolfeConditions1,WolfeConditions2}, we introduce the following step size conditions:
\begin{definition} For $0<c_1<c_2<1$, we call $s_n(\tau_n)$ an Armijo step, if 
\begin{align}
    &\tilde{J}_n(s_n(\tau_n)) \le \hat{J}_n - c_1\hat{G}_n^\top\left(u_n - s_n(\tau_n)\right). \tag{SW1} \label{eq:SW1} \\
    \intertext{Additionally, we define the following Wolfe-type condition:}
    &\tilde{G}_n(s_n(\tau_n))^\top\left(s_n(\tau_n)-u_n \right) \ge c_2\hat{G}_n^\top\left(s_n(\tau_n)-u_n \right) . \tag{SW2} \label{eq:SW2}
\end{align}
\end{definition}

We try to obtain a step size that satisfies \eqref{eq:SW1} and \eqref{eq:SW2} by a bisection approach, as formulated in \Cref{alg:Backtracking}. Since we can not guarantee to find a suitable step size, we perform only a fixed number $T\in\N$ of backtracking steps. Notice that the curvature condition \eqref{eq:SW2} only has an influence, if $u_n-\tau_n\hat{G}_n\in\U$ (see line 6 in \Cref{alg:Backtracking}). This way, we gain the advantages of a Wolfe line search while inside $\U$, without ruling out stationary points at the boundary of $\U$. 

For our convergence analysis, we assume that in each iteration of CSG with line search, \Cref{alg:Backtracking} is initiated with the same $\eta_n=\eta>0$. From a practical point of view, we might also consider a diminishing sequence $(\eta_n)_{n\in\N}$ of backtracking initializations (see \Cref{sec:StepsizeStabilityEx}). The CSG method with backtracking line search (bCSG) is given in \Cref{alg:CSGBacktracking}.
\begin{algorithm} 
    \caption{Backtracking Refinement}
    \label{alg:Backtracking}
    \begin{algorithmic}[0]
    \State{Given $T\in\N$, $0<c_1<c_2<1$ appearing in \eqref{eq:SW1Star} and \eqref{eq:SW2}, $u_n\in\U$, and $\eta > 0$, \\ set $t=1$, $a=0$, $b=\infty$, $\eta_A = \infty$.}
    \While{$t \le T$}
    \State{Calculate step $s=\mathcal{P}_{\U}(u_n-\eta\hat{G}_n)$, weights $\alpha_k^{(n)}(s)$ and $\tilde{J}_n(s)$, $\tilde{G}_n(s)$;}
    \If{\eqref{eq:SW1Star} is not satisfied}
    \State{$b=\eta$;}
    \ElsIf{$s = u_n-\eta\hat{G}_n$ \textbf{and} \eqref{eq:SW2} is not satisfied }
    \State{$a=\eta$;}
    \State{$\eta_A = \eta$;}
    \Else
    \State{\textbf{break}}
    \EndIf
    \If{$b<\infty$}
    \State{$\eta = \frac{a+b}{2}$;}
    \Else
    \State{$\eta = 2a$;}
    \EndIf
    \State{$t = t+1$;}
    \EndWhile
    \If{$t=T+1$ and $\eta_A <\infty$}
    \State{$\tau_n = \eta_A$;}
    \Else
    \State{$\tau_n = \eta$;}
    \EndIf
    \end{algorithmic}
\end{algorithm}
\begin{algorithm}
    \caption{Backtracking CSG (bCSG)}
    \label{alg:CSGBacktracking}
    \begin{algorithmic}[1]
    \State{Given $u_0\in\U$, and a positive sequence $(\eta_n)_{n\in\N}$,}
    \While{Termination condition not met}
    \State{Sample objective function:\\
            $\qquad j_n := j(u_n,x_n)$}
    \State{Sample gradient:\\
            $\qquad g_n := \nabla_1 j(u_n,x_n)$}
    \State{Calculate weights $\alpha_k$}
    \State{Calculate search direction:\\
            $\qquad \hat G_{n} := \sum_{k=1}^n  \alpha_k g_k \vphantom{\Big(}$}
    \State{Compute objective function value approximation:\\
            $\qquad \hat J_{n} := \sum_{k=1}^n  \alpha_k j_k \vphantom{\Big(}$} 
    \State{Calculate step size $\tau_n$ by \Cref{alg:Backtracking} with start at $\eta_n$.}
    \State{Gradient step:\\
            $\qquad u_{n+1} := \mathcal{P}_{\U}\big(u_n - \tau_n \hat G_{n}\big) \vphantom{\Big(}$}
    \State{Update index:\\
            $\qquad n = n+1 \vphantom{\Big(}$}
    \EndWhile
    \end{algorithmic}
\end{algorithm}
Since all of the terms $\tilde{J}_n(s_n(\tau_n))$, $\hat{J}_n$ and $\hat{G}_n$ appearing in \eqref{eq:SW1} contain some approximation error when compared to $J(s_n(\tau_n))$, $J(u_n)$ and $\nabla J(u_n)$ respectively, especially the first iterations of \Cref{alg:CSGBacktracking} might profit from a slightly weaker formulation of \eqref{eq:SW1}. Therefore, in practice, we will replace \eqref{eq:SW1} by the non-monotone version
\begin{equation}
    \tilde{J}_n(s_n(\tau_n)) \le \max_{k \in\{ 0,\ldots,K\}}\hat{J}_{n-k} - c_1\hat{G}_n^\top\left(u_n - s_n(\tau_n)\right), \label{eq:SW1Star} \tag{SW1$^\ast$}
\end{equation}
for some $K\in\{0,\ldots,n\}$. 

\subsection{Convergence Results}
For CSG with backtracking line search, we obtain the same convergence results as for constant step sizes:
\begin{theorem}[Convergence for Backtracking Line Search]\label{theo:ArmijoConvergence}
Let $\mathcal{S}(J)$ be the set of stationary points of $J$ on $U$ as defined in \Cref{def:StationaryPoints}. Assume that 
\begin{equation*}
    \mathcal{N}:= \big\{ J(u)\,:\, u\in\mathcal{S}(J)\big\}\subset\R
\end{equation*}
is of Lebesgue-measure zero and $T$ in \Cref{alg:Backtracking} is chosen large enough, such that $2^{-T}\eta<\frac{2}{L}$. Then every accumulation point of the sequence $(u_n)_{n\in\N}$ generated by \Cref{alg:CSGBacktracking} is stationary and we have convergence in function values.

If $J$ satisfies the stronger assumption of having only finitely many stationary points, $(u_n)_{n\in\N}$ converges to a stationary point of $J$.
\end{theorem}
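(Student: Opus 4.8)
The plan is to reduce the backtracking analysis to the framework already developed for constant step sizes and then re-run the case analysis from the proof of \Cref{theo:ConstantStepConvergence}, with the modifications of \Cref{cor:ConstantStepConvergence}. First I would record that the step sizes produced by \Cref{alg:Backtracking} are uniformly bounded away from $0$ and $\infty$: since the bisection performs at most $T$ halvings or doublings starting from $\eta$, every accepted step satisfies $\tau_{\min}:=2^{-T}\eta\le\tau_n\le 2^{T}\eta=:\tau_{\max}$. Tracing the exit branches of \Cref{alg:Backtracking} then yields a clean dichotomy for the accepted step: either \textbf{(i)} \eqref{eq:SW1Star} holds at $\tau_n$ (this covers every early ``break'' and the case $\tau_n=\eta_A$), or \textbf{(ii)} \eqref{eq:SW1Star} was rejected at every trial, in which case a full run of halvings forces $\tau_n=2^{-T}\eta<\tfrac{2}{L}$. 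This is precisely what the hypothesis $2^{-T}\eta<\tfrac2L$ buys us: the worst-case fallback step lands in the regime where \Cref{lem:descent lemma} applies.

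Next I would upgrade the approximation results. Beyond $\Vert\hat G_n-\nabla J(u_n)\Vert\to0$ and $\vert\hat J_n-J(u_n)\vert\to0$ from \Cref{lem:GradientError}, I need $\Vert\tilde G_n(s_n(\tau_n))-\nabla J(s_n(\tau_n))\Vert\to0$ and $\vert\tilde J_n(s_n(\tau_n))-J(s_n(\tau_n))\vert\to0$. These follow from the same density estimate as in \Cref{lem:ZweitesResultat}: the trial points $s_n(\eta)$ stay in a bounded neighbourhood of $u_n$ because $\tau_n$ and $\Vert\hat G_n\Vert$ are bounded, and the weights are recomputed with respect to the evaluation point, so the bound of \Cref{lem:GradientError} carries over. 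With this I can translate the accepted-step dichotomy into statements about the \emph{true} objective. In case (ii), \eqref{eq:AbschFunktionswerte} holds verbatim with $\tau_n=2^{-T}\eta$ (so that $\tfrac{L}{2}-\tfrac{1}{\tau_n}<0$), while in case (i) the characteristic property \Cref{lem: char property} gives $\hat G_n^\top(u_n-u_{n+1})\ge\Vert u_n-u_{n+1}\Vert^2/\tau_{\max}$, so that \eqref{eq:SW1Star} becomes, up to an error $e_n\to0$ collecting all approximation mismatches,
\begin{equation*}
    J(u_{n+1})\le\max_{k\in\{0,\ldots,K\}}J(u_{n-k})+e_n-\tfrac{c_1}{\tau_{\max}}\Vert u_{n+1}-u_n\Vert^2.
\end{equation*}

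With a uniform descent-type inequality in hand, I would repeat the classification into descent and non-descent steps exactly as in the proof of \Cref{theo:ConstantStepConvergence}. On non-descent steps one again obtains $\Vert u_{n+1}-u_n\Vert\to0$, since the vanishing gradient and approximation errors must otherwise dominate a positive multiple of $\Vert u_{n+1}-u_n\Vert$, and their accumulation points are stationary by passing to the limit in $s_n(\tau_n)-u_n$. For descent steps the quantitative point is that along any ``switching'' segment between two distinct accumulation points the displacement $\Vert u_{n+1}-u_n\Vert$ is bounded below by a fixed $\delta>0$; since $e_n\to0$, eventually $e_n<\tfrac{c_1}{2\tau_{\max}}\delta^2$, so each such step produces a genuine decrease of at least $\tfrac{c_1}{2\tau_{\max}}\delta^2$. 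This restores the mechanism ``moving between distinct accumulation points costs a definite amount of descent'' that drives items (A), (B) and the function-value arguments (R1)/(R2) of \Cref{cor:ConstantStepConvergence}. Once all accumulation points are shown to be stationary and to share a common objective value, \Cref{lem:Convergence} gives full convergence under the finitely-many-stationary-points hypothesis, and convergence in function values follows in general.

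I expect the main obstacle to be the two effects absent in the constant-step setting: the non-monotone \emph{maximum} in \eqref{eq:SW1Star} together with the error $e_n$, which is only vanishing and not summable. The resolution is that the argument never needs summability of the decrements, only that asymptotically every step with non-negligible displacement decreases $J$ by a definite amount; hence $e_n$ can be absorbed once $\Vert u_{n+1}-u_n\Vert$ is bounded below, and the window $\max_{k\le K}$ is handled by standard non-monotone line-search bookkeeping (treating the monotone case $K=0$ first and tracking the reference value over the last $K+1$ iterates). A secondary technical point is to establish the trial-point approximation bound uniformly, since the evaluation points $s_n(\eta)$ are themselves not iterates of the scheme.
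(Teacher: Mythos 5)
Your proposal is correct and follows essentially the same route as the paper's proof: the same dichotomy of backtracking outcomes (either the Armijo-type condition holds at the accepted step, or the fallback $\tau_n = 2^{-T}\eta < \tfrac{2}{L}$ applies), the same use of \Cref{lem: char property} together with the vanishing approximation errors $\vert\hat{J}_n - J(u_n)\vert\to 0$ and $\vert\tilde{J}_n(u_{n+1}) - J(u_{n+1})\vert\to 0$ to split the iterates into descent steps and steps with $\Vert u_{n+1}-u_n\Vert\to 0$, and the same reduction to the case analysis of \Cref{theo:ConstantStepConvergence} and \Cref{cor:ConstantStepConvergence}. If anything, you are slightly more explicit than the paper, which carries out the argument for the monotone condition \eqref{eq:SW1} while the algorithm tests \eqref{eq:SW1Star}; your step-size bounds $2^{-T}\eta\le\tau_n\le 2^{T}\eta$ and your bookkeeping for the nonmonotone maximum are elaborations of the paper's argument, not a different method.
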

\begin{proof}
Notice first, that there are only two possible outcomes of \Cref{alg:Backtracking}: Either $\tau_n$ satisfies \eqref{eq:SW1}, or $\tau_n=2^{-T}\eta<\frac{2}{L}$. Furthermore, as we have seen in the proof of \Cref{lem:Convergence}, for all $\varepsilon>0$ almost all $u_n$ lie in $\varepsilon$-Balls around the accumulation points of $(u_n)_{n\in\N}$, since $(u_n)_{n\in\N}$ is bounded. Therefore, $\tilde{J}_n(u_{n+1})- J(u_{n+1})\to0$ and $\tilde{G}_n(u_{n+1})-\nabla J(u_{n+1})\to0$ (compare \Cref{lem:GradientError}). Since we already know, that the steps with constant step size $\tau_n=2^{-T}\eta<\frac{2}{L}$ can be split in descent steps and steps which satisfy $\Vert u_{n+1}-u_n\Vert\to0$, we now take a closer look at the Armijo-steps, i.e., steps with $\tau_n\neq 2^{-T}\eta$.

If $\tau_n\neq 2^{-T}\eta$, by \eqref{eq:SW1} and \Cref{lem: char property}, it holds
\begin{align*}
    J(u_{n+1})-J(u_n) &\le -c_1\frac{\Vert u_{n+1}-u_n\Vert_\sU^2}{\tau_n^2}+\left\vert J(u_n)-\hat{J}_n\right\vert + \left\vert J(u_{n+1})-\tilde{J}_n(u_{n+1})\right\vert \\
    &\le -c_1\frac{\Vert u_{n+1}-u_n\Vert_\sU^2}{\tau_{\text{max}}^2}+\left\vert J(u_n)-\hat{J}_n\right\vert + \left\vert J(u_{n+1})-\tilde{J}_n(u_{n+1})\right\vert.
\end{align*}
Therefore, we either have
\begin{equation*}
    \left\vert J(u_n)-\hat{J}_n\right\vert + \left\vert J(u_{n+1})-\tilde{J}_n(u_{n+1})\right\vert \le c_1\frac{\Vert u_{n+1}-u_n\Vert_\sU^2}{\tau_{\text{max}}^2},
\end{equation*}
in which case it holds $J(u_{n+1})\le J(u_n)$, or 
\begin{equation*}
    \left\vert J(u_n)-\hat{J}_n\right\vert + \left\vert J(u_{n+1})-\tilde{J}_n(u_{n+1})\right\vert > c_1\frac{\Vert u_{n+1}-u_n\Vert_\sU^2}{\tau_{\text{max}}^2},
\end{equation*}
in which case $\tilde{J}_n(u_{n+1})-J(u_{n+1})\to0$ and $\hat{J}_n- J(u_{n})\to0$ yield $\Vert u_{n+1}-u_n\Vert_\sU\to0$.

Thus, regardless of whether or not $\tau_n=2^{-T}\eta$, we can split $(u_n)_{n\in\N}$ in a subsequence of descent steps and a subsequence of steps with $\Vert u_{n+1}-u_n\Vert_\sU\to0$. The rest of the proof is now identical to the proof of \Cref{theo:ConstantStepConvergence} and \Cref{cor:ConstantStepConvergence}.
\end{proof} 
\subsection{Step Size Stability for bCSG}\label{sec:StepsizeStabilityEx}

To analyze the proclaimed stability of bCSG with respect to the initially guessed step size $\eta_n$, we set $\U=[-10,10]^5$, $\X=(-1,1)^5$ and consider the Problem
\begin{equation}
    \min_{u\in\U}\; J(u), \label{eq:ProblemStability}
\end{equation}
where
\begin{equation*}
    J(u)=-\int_{\X}\frac{20}{1+\Vert u-x\Vert^2}\,\dd x.
\end{equation*}
Problem \eqref{eq:ProblemStability} has the unique solution $u^\ast=0\in\U$, which can be found analytically. 

As a comparison to our method, we choose the AdaGrad \cite{AdaGradPaper} algorithm, as it is widely used for problems of type \eqref{eq:Problemstellung}.
Both AdaGrad and bCSG start each iteration with a presdescribed step size $\eta_n>0$, based on which the calculation of the true step size $\tau_n$ is performed (see \Cref{alg:Backtracking}). We want to test the stability of both methods with respect to the initially chosen step length.
For this purpose, we set $\eta_n = \tfrac{\tau_0}{n^d}$, where $\tau_0>0$, $n$ is the iteration count and $d\in[0,1]$ is fixed. 

For each combination of $\tau_0$ and $d$, we choose 1200 random starting points in $\U$ and perform 500 optimization steps with both AdaGrad and backtracking CSG. Again, the integration weight calculation in bCSG was carried out using the empirical method, leading to a faster weight calculation while decreasing the overall progress per iteration performance. The median of the absolute error $\Vert u_{500}-u^\ast\Vert$ after the optimization, depending on $d$ and $\tau_0$, is presented in \Cref{fig:AdaStab}.

 While there are a few instances where AdaGrad yields a better result than backtracking CSG, we observe that the performance of AdaGrad changes rapidly, especially with respect to the parameter $d$. The backtracking CSG method on the other hand performs superior in most cases and is much less dependent on the choice of parameters.
\begin{figure}
    \centering
    \begin{minipage}[t][][c]{0.48\textwidth}
        \centering
        \includegraphics[width = \textwidth]{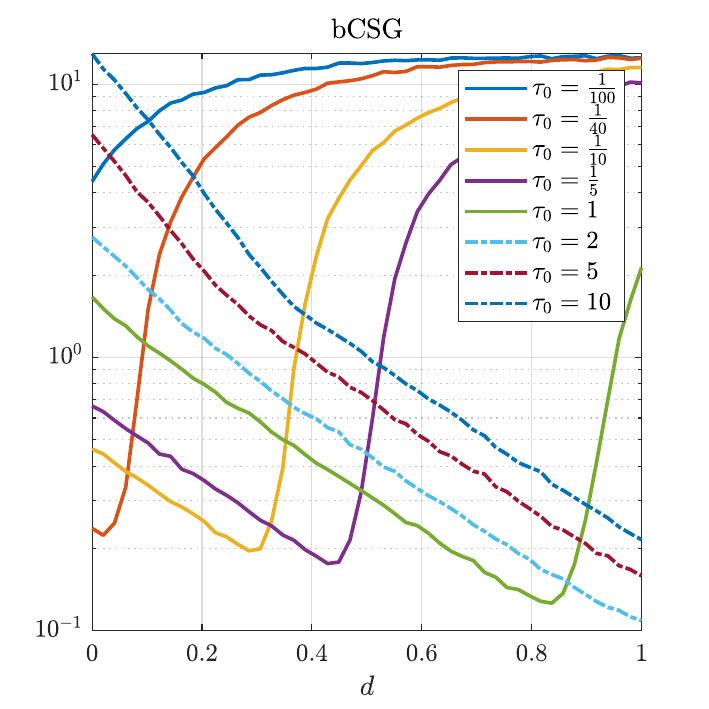}%
    \end{minipage}\hfill
    \begin{minipage}[t][][c]{0.48\textwidth}
        \centering
        \includegraphics[width = \textwidth]{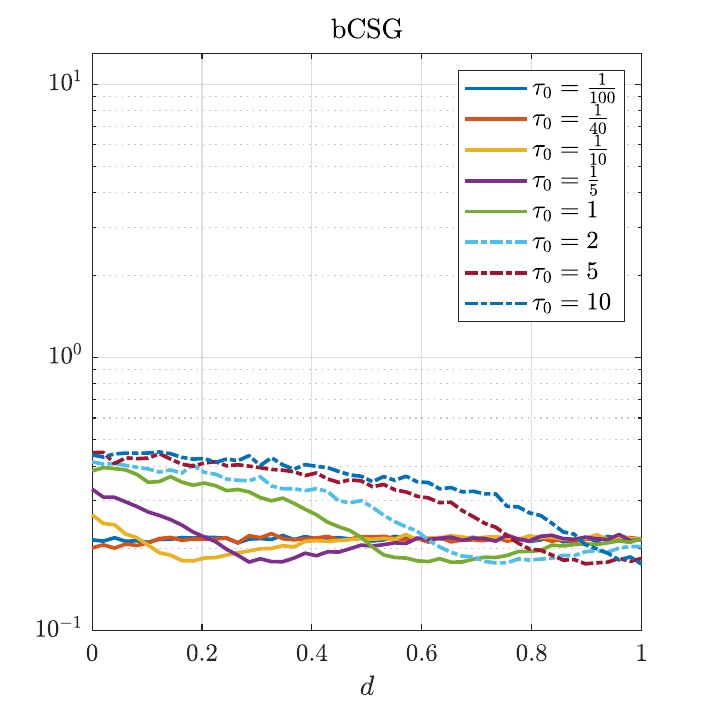}%
    \end{minipage}
    \caption{Median final error $\Vert u_{500} - u^\ast\Vert$ after 500 iterations of AdaGrad and bCSG. Depending on the constants $\tau_0>0$ and $d\in[0,1]$, the initially chosen step size in each iteration is given by $\eta_n =  \tau_0 n^{-d}$.   }
    \label{fig:AdaStab}
\end{figure}
\subsection{Estimations for the Lipschitz Constant of $\nabla J$}\label{sec:EstimateLip}
We have already seen, that the Lipschitz constant $L$ of $\nabla J$ is closely connected with efficient bounds on the step sizes. However, in general, we can not expect to have any knowledge of $L$ a priori. Thus, we are interested in an approximation of $L$, that can be calculated during the optimization process.

Investigating the proof of \Cref{lem:descent lemma} in \cite{DescentUndProjection}
\begin{align*}
    J(u_1) & = J(u_2) + \int_0^1\left\langle \nabla J(u_2)+t(u_1-u_2)),u_1-u_2\right\rangle \dd t \\
    &= J(u_2) + \left\langle\nabla J(u_2),u_1-u_2\right\rangle \\
        &\quad\quad+\int_0^1 \left\langle \nabla J\big(u_2+t(u_1-u_2)\big)-\nabla J(u_2),u_1-u_2\right\rangle \dd t\\
    &\le J(u_2) + \left\langle\nabla J(u_2),u_1-u_2\right\rangle \\
        &\quad\quad+\int_0^1 \left\Vert \nabla J\big(u_2+t(u_1-u_2)\big)-\nabla J(u_2)\right\Vert\cdot\left\Vert u_1-u_2\right\Vert_\sU \dd t \\
    &\le J(u_2) + \left\langle\nabla J(u_2),u_1-u_2\right\rangle + \int_0^1 Lt\Vert u_1-u_2\Vert_\sU^2\dd t \\
    &= J(u_2) + \left\langle\nabla J(u_2),u_1-u_2\right\rangle + \tfrac{L}{2}\Vert u_1-u_2\Vert_\sU^2,
\end{align*}
we observe that we do not need the true Lipschitz constant $L$ of $\nabla J$ for the second inequality. Instead, it is sufficient to choose any constant $C=C(u_1,u_2)$ that satisfies
\begin{equation*}
    \left\Vert\nabla J\big(u_2+t(u_1-u_2)\big)-\nabla J(u_2)\right\Vert \le C\Vert u_1-u_2\Vert_\sU \quad\text{for all }t\in [0,1].
\end{equation*}
To motivate our approach, assume that $J$ is twice continously differentiable. In this case, a possible approximation to the constant $C_n$ in iteration $n$ is $\Vert\nabla^2 J(u_n)\Vert$. Therefore, utilizing the previous gradient approximations, we obtain
\begin{equation*}
   C_n\approx \left\Vert \nabla^2 J(u_n)\right\Vert \approx \frac{\left\Vert \nabla J(u_n)-\nabla J(u_{n-1})\right\Vert}{\left\Vert u_n-u_{n-1}\right\Vert_\sU} \approx \frac{\big\Vert \hat{G}_n-\hat{G}_{n-1}\big\Vert}{\left\Vert u_n-u_{n-1}\right\Vert_\sU}.
\end{equation*}
Then, $C_n^{-1}$ yields a good initial step size for our backtracking line search. To circumvent high oscillation of $C_n$, which may arise from the approximation errors of the terms involved, we project $C_n$ onto the interval $\left[C_{\text{min}},C_{\text{max}}\right]\subset\R$, where $0<C_{\text{min}}<C_{\text{max}}<\tfrac{2^{T+1}}{L}$, i.e.,
\begin{equation}
    C_n = \min\left\{ C_{\text{max}}\, ,\,\max\left\{ C_{\text{min}}\,,\,\tfrac{\left\Vert \hat{G}_n-\hat{G}_{n-1}\right\Vert}{\left\Vert u_n-u_{n-1}\right\Vert_\sU}\right\}\right\}. \label{eq:CnForSICSG} 
\end{equation}
If possible, $C_{\text{min}}$ and $C_{\text{max}}$ should be chosen according to information concerning $L$. However, tight bounds on these quantities are not needed, as long as $T$ is chosen large enough. The resulting SCIBL-CSG (\textbf{SC}aling \textbf{I}ndependent \textbf{B}acktracking \textbf{L}ine search) method is presented in \Cref{alg:SICSG}. Notice that SCIBL-CSG does not require any a priori choice of step sizes and yields the same convergence results as bCSG.
\begin{algorithm} 
\caption{SCIBL-CSG}
\label{alg:SICSG}
\begin{algorithmic}[0]
\State{Given $u_0\in\U$,}
\While{Termination condition not met}
\State{Sample objective function:\\
    $\qquad j_n := j(u_n,x_n)$}
\State{Sample gradient:\\
    $\qquad g_n := \nabla_1 j(u_n,x_n)$}
\State{Calculate weights $\alpha_k$}
\State{Calculate search direction:\\
    $\qquad \hat G_{n} := \sum_{k=1}^n  \alpha_k g_k \vphantom{\Big(}$}
\State{Compute objective function value approximation:\\
    $\qquad \hat J_{n} := \sum_{k=1}^n  \alpha_k j_k \vphantom{\Big(}$} 
\State{Calculate $C_n$ by \eqref{eq:CnForSICSG}.}    
\State{Calculate step size $\tau_n$ by \Cref{alg:Backtracking} with start at $\tfrac{1}{C_n}$.}
\State{Gradient step:\\
    $\qquad u_{n+1} := \mathcal{P}_{\U}\big(u_n - \tau_n \hat G_{n}\big) \vphantom{\Big(}$}
\State{Update index:\\
    $\qquad n = n+1 \vphantom{\Big(}$}
\EndWhile
\end{algorithmic}
\end{algorithm}
\FloatBarrier
\section{Conclusion and Outlook}
In this contribution, we provided a detailed convergence analysis of the CSG method. The calculation of the integration weights was enhanced by several new approaches, which have been discussed and generalized for the possible implementation of mini-batches. 

We provided a convergence proof for the CSG method when carried out with a small enough constant step size. Additionally, it was shown that CSG can be augmented by an Armijo-type backtracking line search, based on the gradient and objective function approximations generated by CSG in the course of the iterations. The resulting bCSG scheme was proven to converge under mild assumptions and was shown to yield stable results for a large spectrum of hyperparameters. Lastly, we combined a heuristic approach for approximating the Lipschitz constant of the gradient with bCSG to obtain a method that requires no a priori step size rule and almost no information about the optimization problem.

For all CSG variants, the stated convergence results are similar to convergence results for full gradient schemes, i.e., every accumulation point of the sequence of iterates is stationary and we have convergence in objective function values. Furthermore, as is the case for full gradient methods, if the optimization problem has only finitely many stationary points, the presented CSG variants produce a sequence which is guaranteed to converge to one of these stationary points.

However, none of the presented convergence results for CSG give any indication of the underlying rate of convergence. Furthermore, while the performance of all proposed CSG variants was tested on academic examples, it is important to analyze how they compare to algorithms from literature and commercial solvers, when used in real world applications.

Detailed numerical results concerning both of these aspects can be found in \cite{CSGPart2}.

\bmhead{Data Availability Statement}
The simulation datasets generated during the current study are available from the corresponding author on reasonable request.
\bmhead{Conflict of Interests}
The authors have no relevant financial or non-financial interests to disclose.
\bibliography{Literature}


\end{document}